\newcommand{\R}{{\mathbb R}}
\renewcommand{\div}{\mathrm{div}}
\newcommand{\Per}{\mathrm{P}}
\newcommand{\dia} {\mathrm{diam}}
\newcommand{\tr} {\mathrm{tr}}
\newtheorem{theorem}{Theorem}[section]
\newtheorem{lemma}[theorem]{Lemma}
\newtheorem{proposition}[theorem]{Proposition}
\theoremstyle{definition}
\newtheorem{remark}[theorem]{Remark}
\begin{document}
 
\title{Second-order  asymptotics  of the fractional perimeter as $s\to 1$}

\author{Annalisa Cesaroni}
\address{Dipartimento di Scienze Statistiche,
Universit\`{a} di Padova, Via Cesare Battisti 241/243, 35121 Padova, Italy}
\email{annalisa.cesaroni@unipd.it}
\author{Matteo Novaga}
\address{Dipartimento di Matematica, Universit\`{a} di Pisa, Largo Bruno Pontecorvo 5, 56127 Pisa, Italy}
\email{matteo.novaga@unipi.it}

\subjclass{ 
49Q15, 
35R11, 
49J45 
}
 \keywords{Fractional perimeters, $\Gamma$-convergence, second-order expansion}

\maketitle

\begin{abstract} In this note we provide a second-order  asymptotic  expansion of the fractional perimeter $\Per_s(E)$, as $s\to 1^-$, in terms of the local perimeter and of a higher order nonlocal functional.
\end{abstract} 

\tableofcontents

\section{Introduction} 
The fractional perimeter of a measurable set $E\subseteq\R^d$ is defined  as follows: 
\begin{equation}\label{pers1} 
\Per_s(E)=\int_E\int_{\R^d\setminus E} \frac{1}{|x-y|^{d+s}} dydx \qquad s\in (0,1).
\end{equation} 
After being first considered in the pivotal paper \cite{crs} (see also \cite{m} where the definition was first given), this functional  has inspired a variety of literature both in the community of pure mathematics, regarding for instance existence and regularity of  fractional minimal surfaces, and in view of  applications  to phase transition problems and to several models with long range interactions. We refer to \cite{v}, and references therein, for an introductory review on this subject.

The limits as $s\to 0^+$ or $s\to 1^-$  are critical,
in the sense that the fractional perimeter  \eqref{pers1} diverges to $+\infty$. Nevertheless, when appropriately
rescaled, such limits  give meaningful information on the set. 

The limit of the (rescaled) fractional perimeter when   $s\to 0^+$ has been considered in \cite{dfpv}, where the authors proved the pointwise convergence of $s \Per_s(E)$ 
to the volume functional $d\omega_{d} |E|$, for sets $E$ of finite perimeter, where $\omega_d$ is the volume of the ball of radius $1$ in $\R^d$.
The corresponding  second-order expansion has been recently considered  in \cite{dnp}. In particular it is shown that 
\begin{align*}  \Per_s(E)- \frac{d\omega_{d}}{s}|E|\stackrel{\Gamma}{\longrightarrow}  \int_{E}\int_{B_R(x)\setminus E}\frac{1}{|x-y|^d}dxdy
- \int_{E}\int_{E\setminus B_R(x)}\frac{1}{|x-y|^d}dxdy
-d\omega_d\log R |E|,\end{align*} with respect to the $L^1$-convergence  of the corresponding characteristic functions, where the limit functional  is independent of $R$,
and it is called the $0$-fractional perimeter.

The limit of $\Per_s(E)$ as $s\to 1^-$, in pointwise sense and in 
the sense of $\Gamma$-convergence,  has been studied  in \cite{adpm,cv}, where it is proved  that
\[
(1-s)\Per_s(E)\stackrel{\Gamma}{\longrightarrow} \omega_{d-1} \Per( E),
\] 
with respect to the  $L^1$-convergence. 

In this paper we are interested in the analysis of the next order expansion. In particular we will prove in Theorem \ref{gamma} that
\[ 
\frac{\omega_{d-1}\Per(E)}{1-s} - \Per_s(E)\stackrel{\Gamma}{\longrightarrow} \mathcal{H}(E)
\qquad \text{as $s\to 1^-$,}
\]   
with respect to the $L^1$-convergence, and 
the limit functional is defined as 
\begin{align}\label{limit}  \mathcal{H}(E):= &  \int_{\partial^* E}\int_{(E\Delta H^-(y))\cap B_{1}(y) }  \frac{|(y-x)\cdot \nu(y)|}{|x-y|^{d+1} } dxd\mathcal{H}^{d-1}(y)\\
& - \int_{\partial^* E} \int_{E\setminus B_1(y)} \frac{(y-x)\cdot \nu(y)}{|x-y|^{d+1} }  dxd\mathcal{H}^{d-1}(y)- \omega_{d-1}\Per(E)\nonumber \end{align} for sets $E$ with finite perimeter, 
and $\mathcal{H}(E)=+\infty$ otherwise.
Here we denote by $\partial^*E$ the reduced boundary of $E$, by $\nu(y)$ the outer normal to $E$ at $y\in \partial^* E$ and by $H^-(y)$ the hyperplane
 \[H^-(y):=\{x\in\R^d \ |\ (y-x)\cdot \nu(y)>0\}.\] 

We observe that, in dimension $d=2$,  the functional $\mathcal{H}(E)$ coincides with the $\Gamma$-limit as $\delta\to 0^+$ 
of the nonlocal energy 
\[2|\log \delta| \Per(E)-\int_{E}\int_{\R^2\setminus E} \frac{\chi_{(\delta, +\infty)}(|x-y|)}{|x-y|^3}dxdy,\] 
as recently proved by Muratov and Simon in \cite[Theorem 2.3]{ms}.
We also mention the recent work \cite{cnp}, where the authors establish the second-order expansion of appropriately rescaled  nonlocal functionals 
approximating Sobolev seminorms, recently considered by Bourgain, Brezis and Mironescu \cite{bbm}.

\medskip

As for the properties of  the limit functional $\mathcal{H}$, first of all we observe that it is coercive in the sense that it provides a control on the perimeter of the set, see Proposition \ref{coe}. Moreover it is bounded on $C^{1, \alpha}$ sets, for $\alpha>0$,  and on convex sets  $C$ such that  for some $s\in (0,1)$  the boundary integral $\int_{\partial^* C} H_s(C,x)d\mathcal{H}^{d-1}(x)$ is finite, where $H_s(C, x)$ is  the fractional mean curvature of $C$ at $x$, see Proposition \ref{remreg}.  In particular  when $E$ has  boundary of class $C^2$,  in Proposition \ref{repr} we show that the limit functional $\mathcal{H}( E)$ can be equivalently written as 
\begin{align*} \mathcal{H}( E)=&
\frac{1}{d-1}  \int_{\partial E} \int_{\partial E }  \frac{(\nu(x)-\nu(y))^2}{2|x-y|^{d-1}}d\mathcal{H}^{d-1}(x)d\mathcal{H}^{d-1}(y)
 -\frac{d\omega_{d-1}}{d-1}\Per(E) 
\\& +\frac{1}{d-1}  \int_{\partial  E} \int_{\partial E} \frac{1}{|x-y|^{d-1}}\left|\frac{(y-x)}{|y-x|}\cdot \nu(x)\right|^2((d-1)\log|x-y|-1) d\mathcal{H}^{d-1}(x)d\mathcal{H}^{d-1}(y) \\
&+ \int_{\partial E} \int_{\partial E } \frac{H(E,x)\nu(x)\cdot (y-x)}{|y-x|^{d-1}} \log|x-y| \, d\mathcal{H}^{d-1}(x)d\mathcal{H}^{d-1}(y)\end{align*}
where $H(E,x)$ denotes the (scalar) mean curvature at $x\in\partial E$, that is the sum of the principal curvatures divided by $d-1$. 
Notice that the first term in the expression above is the (squared) $L^2$-norm of a nonlocal second fundamental form of $\partial E$. We recall also that an analogous representation formula for the same functional in   dimension $d=2$, has been given in \cite{ms}. 

Some interesting issues about the limit functional remain open, for instance existence and rigidity (at least for small volumes) of  
minimizers of $\mathcal{H}$ among sets with fixed volume, see the discussion in Remark \ref{isorem}. 
 
\smallskip 
 
\paragraph{\bf Aknowledgements}
 The authors are members and were supported by the INDAM/GNAMPA.

 
\section{Second order asymptotics} 
We introduce the following functional on sets $E\subseteq \R^d$ of finite Lebesgue measure:
\begin{equation}\label{ps} \mathcal{P}_s(E)= \begin{cases} \frac{\omega_{d-1}}{1-s} \Per( E) -\Per_s(E) & \text{ if }\Per(E)<+\infty\\ 
+\infty &\text{otherwise.}\end{cases}
\end{equation} 
We now state the main result of the paper. 
 
 \begin{theorem}\label{gamma}
There holds   
\[
\mathcal{P}_s(E)\stackrel{\Gamma}{\longrightarrow} \mathcal{H}(E) \qquad \text{ as $s\to 1^-$,}  
\] 
with respect to the $L^1$-topology, where the functional $\mathcal{H}(E)$ is defined in \eqref{limit}.
\end{theorem}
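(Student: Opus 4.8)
\medskip
\noindent\emph{Proof plan.}

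The argument is built on an exact boundary representation of the fractional perimeter. Starting from $|x-y|^{-d-s}=-\tfrac1s\div_y\!\big((y-x)\,|x-y|^{-d-s}\big)$ and applying the Gauss--Green theorem on the set $E^c$, I would first show that for every $E\subseteq\R^d$ with $|E|+\Per(E)<\infty$
\[
\Per_s(E)=\frac1s\int_{\partial^*E}\int_E\frac{(y-x)\cdot\nu(y)}{|x-y|^{d+s}}\,dx\,d\mathcal H^{d-1}(y).
\]
The only subtle point is the singularity of $y\mapsto(y-x)|x-y|^{-d-s}$ at $y=x$: the field is locally integrable there because $s<1$, and in the approximation by smooth truncations the spherical correction term $\rho^{1-d-s}\int_E\mathcal H^{d-1}(\partial B_\rho(x)\cap E^c)\,dx$ vanishes as $\rho\to0$, since $\int_E\mathcal H^{d-1}(\partial B_\rho(x)\cap E^c)\,dx=\tfrac12\rho^{d-1}\int_{\mathbf S^{d-1}}|E\Delta(E-\rho\omega)|\,d\mathcal H^{d-1}(\omega)\le C\rho^{d}\Per(E)=o(\rho^{d+s-1})$. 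Splitting the inner integral over $E\cap B_1(y)$ and $E\setminus B_1(y)$ and, on the first piece, comparing $\chi_E$ with $\chi_{H^-(y)}$ — using $\int_0^1 r^{-s}dr=(1-s)^{-1}$, $\int_{\mathbf S^{d-1}}(\omega\cdot\nu)_+\,d\mathcal H^{d-1}(\omega)=\omega_{d-1}$ and $\tfrac1{s(1-s)}=\tfrac1s+\tfrac1{1-s}$ — this produces the identity
\[
\mathcal P_s(E)=\frac1s\,\mathcal H^{(s)}(E),
\]
where $\mathcal H^{(s)}$ is the functional obtained from \eqref{limit} by replacing the exponent $d+1$ by $d+s$ everywhere; in particular its first (``defect'') term $\mathcal D_s(E)$ is nonnegative, and $\mathcal H^{(s)}(E)$ is finite whenever $|E|+\Per(E)<\infty$. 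Thus Theorem~\ref{gamma} becomes the statement $\tfrac1s\,\mathcal H^{(s)}\stackrel{\Gamma}{\longrightarrow}\mathcal H$.

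\noindent\emph{Pointwise convergence and the $\Gamma$-$\limsup$.} For fixed $E$ with $|E|+\Per(E)<\infty$ one has $\tfrac1s\mathcal H^{(s)}(E)\to\mathcal H(E)$: in $\mathcal D_s(E)$ the kernel $|x-y|^{-d-s}$ is increasing in $s$ on $B_1(y)$, so $\mathcal D_s(E)\uparrow\mathcal D_1(E)\in[0,+\infty]$ by monotone convergence; in the long-range term $|(y-x)\cdot\nu(y)|\,|x-y|^{-d-s}\le|x-y|^{1-d}$ on $E\setminus B_1(y)$, which is integrable in $dx\,d\mathcal H^{d-1}(y)$ when $|E|+\Per(E)<\infty$, so dominated convergence applies; and $s^{-1}\to1$. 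Hence $\mathcal P_s(E)\to\mathcal H(E)$ pointwise (with the usual convention if $\mathcal H(E)=+\infty$), the constant sequence $E_s\equiv E$ is a recovery sequence, and the $\Gamma$-$\limsup$ inequality holds for every $E$.

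\noindent\emph{The $\Gamma$-$\liminf$.} Let $E_s\to E$ in $L^1$, and assume $\liminf_s\mathcal P_s(E_s)<\infty$. A version of the coercivity estimate of Proposition~\ref{coe}, uniform in $s$ near $1$, gives $\sup_s\Per(E_s)<\infty$, hence $\Per(E)<\infty$ by lower semicontinuity of the perimeter. Fix $\delta\in(0,1)$ and split $\Per_s$ according to whether $|x-y|<\delta$ or $|x-y|\ge\delta$:
\[
\mathcal P_s(E)=\mathcal Q^\delta_s(E)-\Per^{\ge\delta}_s(E),\qquad
\mathcal Q^\delta_s(E):=\frac{\omega_{d-1}}{1-s}\Per(E)-\frac12\int_{\{|x-y|<\delta\}}\frac{|\chi_E(x)-\chi_E(y)|}{|x-y|^{d+s}}\,dx\,dy,
\]
with $\Per^{\ge\delta}_s(E)=\int_{\{|x-y|\ge\delta\}}\chi_E(x)\big(1-\chi_E(y)\big)|x-y|^{-d-s}\,dx\,dy$. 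Since the kernel of $\Per^{\ge\delta}_s$ is bounded, supported away from the diagonal, and converges in $L^1$ as $s\to1^-$, the functional $\Per^{\ge\delta}_s$ is continuous along $L^1$-convergent sequences of sets of bounded measure, uniformly in $s$ near $1$; hence $\Per^{\ge\delta}_s(E_s)\to\Per^{\ge\delta}_1(E)$, and since $\mathcal Q^\delta_s(E)\to\mathcal H(E)+\Per^{\ge\delta}_1(E)=:\mathcal Q^\delta(E)$ pointwise, it suffices to prove $\liminf_s\mathcal Q^\delta_s(E_s)\ge\mathcal Q^\delta(E)$ for one fixed $\delta$. This is a genuinely \emph{local} second-order statement (interactions only at scale $\le\delta$), which I would establish by the localization/blow-up method: view $\mathcal Q^\delta_s(E_s)$ as the mass of positive measures concentrated near $\partial E_s$, pass to a weak-$*$ limit $\mu$, and at $\mathcal H^{d-1}$-a.e.\ $y_0\in\partial^*E$ blow up $E_s$ about $y_0$ along a joint diagonal of $s\to1^-$ and of the scale; the rescaled sets converge to $H^-(y_0)$, a lower bound for $d\mu/d(\mathcal H^{d-1}\lfloor\partial^*E)$ at $y_0$ reproduces the defect integrand of $\mathcal H$ at $y_0$, while the part of $\partial^*E_s$ not seen by $\mathcal H^{d-1}\lfloor\partial^*E$ — the source of a possible upward jump of the perimeter — contributes a nonnegative amount asymptotically, precisely because $\mathcal D_s\ge0$, so that the term $-\omega_{d-1}\Per$ in $\mathcal H$ comes out correctly.

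\noindent The main obstacle is this last step. Although $\mathcal Q^\delta_s$ only involves interactions at distance $\le\delta$, it is still nonlocal below the scale $\delta$, so the blow-up does not split the energy and one must show that the cross-scale interactions created by the rescaling are asymptotically negligible; moreover both limits $s\to1^-$ and ``blow-up scale $\to0$'' act on the divergent factor $(1-s)^{-1}$, so the joint diagonalization has to be chosen with care, relying on uniform-in-$s$ integrability estimates near $\partial E_s$ of the kind already needed for the pointwise expansion. Everything else reduces, via the identity $\mathcal P_s=\tfrac1s\mathcal H^{(s)}$, to monotone and dominated convergence and to the continuity of the long-range part.
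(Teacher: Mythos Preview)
Your setup coincides with the paper's: the boundary representation of $\Per_s$, the identity $\mathcal P_s(E)=\tfrac1s\mathcal H^{(s)}(E)$, the pointwise limit via monotone/dominated convergence, and the $\Gamma$-$\limsup$ by the constant recovery sequence are exactly what the paper does in Proposition~\ref{pointwise}. The divergence is in the $\Gamma$-$\liminf$.

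The blow-up scheme you sketch for $\mathcal Q^\delta_s$ is left as a programme (``the main obstacle is this last step''), and in fact it is not needed. You already observed that $s\mapsto\mathcal D_s(E)$ is increasing on $(0,1)$ because $|x-y|^{-d-s}$ is increasing in $s$ on $B_1(y)$; the paper exploits precisely this monotonicity to bypass any blow-up. Concretely, set
\[
\mathcal F_s(E):=s\,\mathcal P_s(E)-s\int_E\int_{E\setminus B_1(x)}\frac{dy\,dx}{|x-y|^{d+s}}
= \mathcal D_s(E)-\omega_{d-1}\Per(E)-\int_E\mathcal H^{d-1}\big(\partial B_1(x)\cap E\big)\,dx,
\]
which, by your own computation, is \emph{increasing} in $s$ (the last two terms are $s$-independent). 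Then for any $\bar s\in(0,1)$ and any $E_s\to E$ in $L^1$,
\[
\liminf_{s\to1}\mathcal F_s(E_s)\ \ge\ \liminf_{s\to1}\mathcal F_{\bar s}(E_s)\ \ge\ \mathcal F_{\bar s}(E),
\]
where the second inequality is just lower semicontinuity of $\mathcal P_{\bar s}$ at the \emph{fixed} exponent $\bar s$ (Proposition~\ref{compact}) together with dominated convergence for the bounded-kernel tail; finally let $\bar s\uparrow1$. The remaining piece $s\int_{E_s}\int_{E_s\setminus B_1(x)}|x-y|^{-d-s}\,dy\,dx$ is nonnegative, so Fatou gives the missing long-range contribution, and \eqref{perparti} reassembles everything into $\mathcal H(E)$. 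This is Lemma~\ref{lemmamono} in the paper. Note also that the equi-coercivity you invoke (uniform perimeter bound along $E_s$) is itself proved in the paper (Proposition~\ref{equicoe}) via this same monotonicity, so your appeal to ``a version of Proposition~\ref{coe}, uniform in $s$'' is really a forward reference to the very trick that renders the blow-up unnecessary.
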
  

\begin{remark}\upshape Observe that $\mathcal{H}(E)$ can be also expressed as
\begin{align}\label{limit2} \mathcal{H}(E)= &  -\omega_{d-1}\Per(E)+ \int_{\partial^* E}\int_{(E\Delta H^-(y))\cap B_{1}(y) }  \frac{|(y-x)\cdot \nu(y)|}{|x-y|^{d+1} } dxd\mathcal{H}^{d-1}(y)\\
& +\int_{E} \int_{E\setminus B_1(x)} \frac{1}{|x-y|^{d+1} }  dydx-  \int_{E} \int_{\partial B_1(x)\cap E}  d\mathcal{H}^{d-1}(y)dx.\nonumber \end{align}

Indeed   by the divergence theorem and by the fact that $\div_y\left(\frac{y-x}{|y-x|^{d+1}}\right)=-\frac{1}{|y-x|^{d+1}}$  we get  
\begin{align}\label{perparti}
&-\int_{\partial^* E} \int_{E\setminus B_1(y)} \frac{(y-x)\cdot \nu(y)}{|x-y|^{d+1} } dxd\mathcal{H}^{d-1}(y) \\ \nonumber =&-\int_{E} \int_{\partial^* E\setminus B_1(x)} \frac{(y-x)\cdot \nu(y)}{|x-y|^{d+1} } d\mathcal{H}^{d-1}(y)dx
\\ \nonumber  =&\int_{E} \int_{E\setminus B_1(x)} \frac{1}{|x-y|^{d+1} }  dydx+\int_{E} \int_{\partial B_1(x)\cap E} \frac{(y-x)\cdot \frac{x-y}{|y-x|}}{|x-y|^{d+1} } d\mathcal{H}^{d-1}(y)dx 
\\    =&\int_{E} \int_{E\setminus B_1(x)} \frac{1}{|x-y|^{d+1} }  dydx-\int_{E} \int_{\partial B_1(x)\cap E}  d\mathcal{H}^{d-1}(y)dx.\nonumber 
\end{align} 
\end{remark}

First of all we recall some properties of the functional $\mathcal{P}_s$. 

\begin{proposition}[Coercivity and  lower semicontinuity] \label{compact} Let $ s\in (0,1)$. 
 If $E_n$ is a sequence of sets such that  $|E_n|\leq m$ for some $m>0$  and  $\mathcal{P}_s(E_n)\leq C$ for some $C>0$ independent of $n$, then  $\Per( E_n)\leq C'$ for some $C'$ depending on $C, s, d, m$. 
 
In particular, the sequence $E_n$ converges in $L^1_{\rm loc}$, up to a subsequence, to a limit set $E$ of finite perimeter, with  $|E|\leq m$.

Moreover,  the functional $\mathcal{P}_s$ is lower semicontinuous with respect to  the $L^1$-convergence. 
 \end{proposition}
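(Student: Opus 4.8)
The plan is to base everything on the elementary splitting of $\Per_s$ at a fixed scale $r>0$, together with the two classical one-sided bounds controlling $\Per_s$ by $\Per$ and by Lebesgue measure. For $r>0$ write $\Per_s(E)=\Per_s^{<r}(E)+\Per_s^{\geq r}(E)$, where
\[
\Per_s^{<r}(E):=\int_E\int_{(\R^d\setminus E)\cap B_r(x)}\frac{dy\,dx}{|x-y|^{d+s}},\qquad
\Per_s^{\geq r}(E):=\int_E\int_{(\R^d\setminus E)\setminus B_r(x)}\frac{dy\,dx}{|x-y|^{d+s}}.
\]
Passing to the symmetric form and setting $y=x+z$ gives $2\Per_s^{<r}(E)=\int_{B_r(0)}|z|^{-d-s}\|\chi_E(\cdot+z)-\chi_E\|_{L^1}\,dz$, so the $BV$ translation estimate $\|\chi_E(\cdot+z)-\chi_E\|_{L^1}\le|z|\,\Per(E)$ yields $\Per_s^{<r}(E)\le\frac{d\omega_d}{2(1-s)}\,r^{1-s}\,\Per(E)$; dropping $\chi_{\R^d\setminus E}$ and integrating in $y$ gives $\Per_s^{\geq r}(E)\le\frac{d\omega_d}{s}\,r^{-s}\,|E|$ (see also \cite{adpm}).

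For the coercivity, the hypothesis $\mathcal{P}_s(E_n)\le C$ forces $\Per(E_n)<+\infty$, and inserting the two bounds gives
\[
\frac{\omega_{d-1}}{1-s}\Per(E_n)= \mathcal{P}_s(E_n)+\Per_s(E_n)\le C+\frac{d\omega_d}{2(1-s)}\,r^{1-s}\,\Per(E_n)+\frac{d\omega_d}{s}\,r^{-s}\,m .
\]
Choosing $r=r(s,d)>0$ with $d\omega_d\,r^{1-s}=\omega_{d-1}$ absorbs exactly half of the perimeter term on the left, leaving $\Per(E_n)\le C':=\frac{2(1-s)}{\omega_{d-1}}\big(C+\frac{d\omega_d}{s}r^{-s}m\big)$, which depends only on $C,s,d,m$. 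Then $\chi_{E_n}$ is bounded in $BV(\R^d)$ (by $m+C'$), so by the compact embedding of $BV$ into $L^1_{\rm loc}$ a subsequence converges in $L^1_{\rm loc}$, and a.e., to the characteristic function of a set $E$; lower semicontinuity of $\Per$ gives $\Per(E)\le C'$, and Fatou's lemma gives $|E|\le m$.

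For the lower semicontinuity of $\mathcal{P}_s$ the key intermediate fact is that, for $s$ fixed, $\Per_s$ is \emph{continuous} along sequences $E_n\to E$ in $L^1(\R^d)$ with $\sup_n\Per(E_n)=:C'<+\infty$. Indeed the near part $\Per_s^{<r}(E_n)\to\Per_s^{<r}(E)$ by dominated convergence in the $z$-integral: for each $z$ the integrand converges (triangle inequality and translation invariance of the $L^1$ norm) and is dominated, uniformly in $n$, by $z\mapsto C'|z|^{1-d-s}$, which is integrable on $B_r(0)$ since $s<1$; and the far part $\Per_s^{\geq r}(E_n)\to\Per_s^{\geq r}(E)$ because, writing $\Per_s^{\geq r}(E)=\frac{d\omega_d}{s}r^{-s}|E|-\int_{\R^d}\int_{\R^d}\chi_E(x)\chi_E(y)K(x,y)\,dx\,dy$ with the \emph{bounded} kernel $K(x,y):=\chi_{\{|x-y|\geq r\}}|x-y|^{-d-s}$, one has $|E_n|\to|E|$ and $\chi_{E_n}\otimes\chi_{E_n}\to\chi_E\otimes\chi_E$ in $L^1(\R^d\times\R^d)$. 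Hence $\Per_s(E_n)\to\Per_s(E)$, in particular $\Per_s(E)<+\infty$. Now let $E_n\to E$ in $L^1$; to prove $\mathcal{P}_s(E)\le\liminf_n\mathcal{P}_s(E_n)$ we may pass to a subsequence realizing the $\liminf$ and assume it is finite, so $\mathcal{P}_s(E_n)\le C$ along it; the coercivity estimate then gives $\Per(E_n)\le C'$, whence $\Per(E)\le\liminf_n\Per(E_n)$ by lower semicontinuity of $\Per$ and $\Per_s(E_n)\to\Per_s(E)$ by the previous step, so $\mathcal{P}_s(E)=\frac{\omega_{d-1}}{1-s}\Per(E)-\Per_s(E)\le\liminf_n\big(\frac{\omega_{d-1}}{1-s}\Per(E_n)-\Per_s(E_n)\big)=\liminf_n\mathcal{P}_s(E_n)$.

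I expect the main obstacle to be the continuity of $\Per_s$, specifically the uniform control of the near-diagonal contribution $\Per_s^{<r}$: without equiboundedness of the perimeters this continuity fails and, since $\mathcal{P}_s$ carries the opposite sign in front of $\Per_s$, one would only obtain upper, not lower, semicontinuity. The coercivity estimate is precisely what supplies that equiboundedness along sequences with $\mathcal{P}_s$ bounded, so the two halves of the statement come out together.
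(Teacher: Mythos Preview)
Your proof is correct and follows a somewhat different, more self-contained route than the paper's. For the coercivity, the paper invokes the interpolation inequality $\Per_s(E)\le \frac{d\omega_d}{2^s s(1-s)}\Per(E)^s|E|^{1-s}$ (cited from \cite{blp}) and deduces the perimeter bound from $\omega_{d-1}\Per(E_n)-\frac{d\omega_d}{2^s s}\Per(E_n)^s m^{1-s}\le C(1-s)$; your near/far splitting at scale $r$ together with the $BV$ translation estimate is essentially the raw material from which that interpolation inequality is built, and choosing $r$ to absorb half the perimeter term plays the role of optimizing the interpolation. For the lower semicontinuity, the paper appeals to the compact embedding of $BV$ into $H^{s/2}$ to get $\Per_s(E_n)\to\Per_s(E)$ along perimeter-bounded sequences, whereas you establish this continuity directly via dominated convergence on the $z$-integral for the near part and an elementary $L^1\times L^1$ argument for the far part. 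The upshot is the same: continuity of $\Per_s$ plus lower semicontinuity of $\Per$ gives lower semicontinuity of $\mathcal P_s$. Your version has the advantage of being self-contained and of making explicit why the equiboundedness of $\Per(E_n)$ (supplied by the coercivity) is exactly what is needed; the paper's version is terser but leans on two external references.
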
  
 
\begin{proof}[Proof of Proposition \ref{compact}] 
Let $E$ with $|E|\leq m$. By the interpolation inequality  proved in  \cite[Lemma 4.4]{blp}  we get 
\[\Per_s(E)\leq \frac{d\omega_d}{2^s s(1-s)}\Per(E)^s |E|^{1-s}\leq \frac{d\omega_d}{2^s s(1-s)}\Per(E)^s m^{1-s}. \] 
For a sequence $E_n$ as in the statement, this gives 
\begin{equation}\label{inter} C(1-s)\geq \omega_{d-1}\Per( E_n)- (1-s)\Per_{s}(E_n)\geq \omega_{d-1}\Per( E_n) - \frac{d\omega_d}{2^s s }\Per(  E_n)^s m^{1-s}. 
\end{equation}
From this we conclude that necessarily  $\Per( E_n)\leq C'$, where $C'$ is a constant which depends on $C, s, d, m$. As a consequence, 
by the local compactness in $L^1$ of sets of finite perimeter (see \cite{maggibook}) we obtain the local convergence of $E_n$,
up to a subsequence, to a limit set $E$ of finite perimeter.

Now, assume that $E_n\to E$ in $L^1$ and that  $\frac{c}{1- s}\Per( E_n)- \Per_{s}(E_n)\leq C$. By the previous argument, we get that $\Per(E_n)\leq C'$, where $C'$ is a constant which depends on $C, s, d, |E|$. By the compact embedding of $BV$ in  $H^{s/2}$, see \cite{hit, m}, we get that  $\lim_n \Per_s(E_n)=\Per_s(E)$, up to passing to a suitable subsequence.  This, along with the  lower semicontinuity of the perimeter with respect to local convergence in $L^1$ (see \cite{maggibook}) gives the conclusion. 
\end{proof}

The proof  of Theorem \ref{gamma} is based on some preliminary results. First of all we compute  the pointwise limit, then we  show that the functional $s\mathcal{P}_s(E)$ is given by the sum of the  functional $\mathcal{F}_s(E)$, defined in \eqref{fis},  which is lower semicontinuous and monotone increasing in $s$, and of a continuous functional. This will permit to show that the pointwise limit coincides with the $\Gamma$-limit. 

\begin{proposition}[Pointwise limit]  \label{pointwise}   
Let   $E\subseteq\R^d $ be a measurable set  such that  $|E|<+\infty$ and $\Per(E)<+\infty$.  Then 
\[ \lim_{s\to 1^-} \left[  \frac{\omega_{d-1}}{1-s} \Per(E)- \Per_s(E)\right]=\begin{cases} \mathcal{H}(E) & \text{ if $ \int_{\partial^* E}\int_{(E\Delta H^-(y))\cap B_{1}(y) }  \frac{|(y-x)\cdot \nu(y)|}{|x-y|^{d+1} } dxd\mathcal{H}^{d-1}(y)<+\infty$ }\\ +\infty &\text{otherwise.} \end{cases}\] 
where $\mathcal{H}(E)$ is defined in \eqref{limit} and  $H^-(y):=\{x\in\R^d \ |\ (y-x)\cdot \nu(y)>0\}$.
\end{proposition}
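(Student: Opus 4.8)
The plan is to start from the identity
\[
\omega_{d-1}\Per(E) = (1-s)\int_{\partial^* E}\int_{\R^d} \frac{|(y-x)\cdot\nu(y)|}{|x-y|^{d+1}}\,\chi_{H^-(y)\Delta E \text{-type split}}\,dx\,d\mathcal{H}^{d-1}(y),
\]
more precisely from the exact computation of the half-space fractional perimeter density: for a hyperplane through $y$ with normal $\nu(y)$ one has $\Per_s(H^-(y)) $ localized to give, after integrating in the tangential directions,
\[
(1-s)\int_{\partial^* E}\int_{H^-(y)} \frac{|(y-x)\cdot\nu(y)|}{|x-y|^{d+1}}\,dx\,d\mathcal{H}^{d-1}(y) = \frac{\omega_{d-1}}{?}\Per(E)+o(1),
\]
so that subtracting the genuine $\Per_s(E)$, written via the divergence theorem as a boundary integral in the same spirit as \eqref{perparti}, produces the difference of the two characteristic functions $\chi_E-\chi_{H^-(y)}$ against the kernel $\frac{|(y-x)\cdot\nu(y)|}{|x-y|^{d+1}}$. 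Concretely, I would first rewrite
\[
\Per_s(E)=\frac{1}{s}\int_{\partial^* E}\int_{\R^d}\frac{(y-x)\cdot\nu(y)}{|x-y|^{d+s}}\,\mathrm{sgn}\big((y-x)\cdot\nu(y)\big)\cdots
\]
Actually the clean route is: by the divergence theorem (as in \eqref{perparti}, now with the $s$-kernel) one gets $\Per_s(E)=\frac{1}{s}\int_{\partial^*E}\int_{\partial^*E}\frac{(y-x)\cdot\nu(y)}{|x-y|^{d+s}}\,d\mathcal{H}^{d-1}(x)\,d\mathcal{H}^{d-1}(y)$ up to sign conventions — but for the pointwise limit it is cleaner to keep the volume form and split $\R^d$ into $B_1(y)$ and its complement.

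The key steps, in order. (1) Split: write $\frac{\omega_{d-1}}{1-s}\Per(E)-\Per_s(E)$ as a single integral over $\partial^*E$ by comparing, at each $y\in\partial^*E$, the contribution of $E$ near $y$ with the contribution of the tangent half-space $H^-(y)$; the divergence-theorem identity analogous to \eqref{perparti} (with the $1/|x-y|^{d+s}$ kernel) reduces $\Per_s(E)$ itself to such a boundary integral, and the half-space computation gives exactly $\frac{\omega_{d-1}}{1-s}\Per(E)$ plus a correction. (2) Inside $B_1(y)$: on the symmetric difference $(E\Delta H^-(y))\cap B_1(y)$ the integrand $(1-s)\frac{|(y-x)\cdot\nu(y)|}{|x-y|^{d+s}}$ converges monotonically (after passing the factor through) to $\frac{|(y-x)\cdot\nu(y)|}{|x-y|^{d+1}}$, and one passes to the limit by monotone/dominated convergence once the limiting integral is assumed finite; if it is $+\infty$, Fatou forces the limit to be $+\infty$, giving the dichotomy. (3) Outside $B_1(y)$: here $|x-y|\ge1$, so $(1-s)|x-y|^{-(d+s)}\to 0$ pointwise but one must handle the $\frac{1}{1-s}$ prefactor on $\Per(E)$; the point is that outside $B_1$ the half-space and $E$ both contribute the "full" tail, these tails cancel to leading order, and what survives is precisely $-\int_{\partial^*E}\int_{E\setminus B_1(y)}\frac{(y-x)\cdot\nu(y)}{|x-y|^{d+1}}\,dx\,d\mathcal{H}^{d-1}(y)$ together with the leftover $-\omega_{d-1}\Per(E)$ coming from the mismatch of the half-space normalization constant at order $1$. (4) Assemble the three pieces to recognize \eqref{limit}.

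The main obstacle is step (3), i.e.\ isolating the finite order-one remainder after the cancellation of the two divergent quantities $\frac{\omega_{d-1}}{1-s}\Per(E)$ and the leading part of $\Per_s(E)$. Concretely one must show that
\[
\frac{\omega_{d-1}}{1-s}\Per(E) - (1-s)^{-1}\!\!\int_{\partial^*E}\!\!\int_{\R^d}(1-s)\frac{|(y-x)\cdot\nu(y)|}{|x-y|^{d+s}}\chi_{E_y\text{-split}}\,dx\,d\mathcal{H}^{d-1}(y)
\]
has a limit, uniformly controlled using only $\Per(E)<+\infty$ and $|E|<+\infty$; the subtlety is that the blow-up of $E$ at $\mathcal{H}^{d-1}$-a.e.\ point of $\partial^*E$ is the half-space only in an $L^1$ (not uniform) sense, so dominated convergence must be set up with an integrable majorant in $y$ — this is where the finiteness hypothesis on $\int_{\partial^*E}\int_{(E\Delta H^-(y))\cap B_1(y)}\frac{|(y-x)\cdot\nu(y)|}{|x-y|^{d+1}}\,dx\,d\mathcal{H}^{d-1}(y)$ enters, and where one uses the elementary bound $\frac{|(y-x)\cdot\nu(y)|}{|x-y|^{d+1}}\le\frac{1}{|x-y|^d}$ on $B_1$ together with the De Giorgi structure theorem to control the reduced boundary. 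Steps (1), (2), (4) I expect to be essentially bookkeeping with the divergence theorem and monotone/dominated convergence, modeled on \eqref{perparti}.
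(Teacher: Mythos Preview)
Your overall architecture --- divergence theorem to write $\Per_s(E)$ as a boundary integral, split at $B_1(y)$, compare with the tangent half-space inside, monotone convergence on the symmetric difference, dominated convergence outside --- is exactly the paper's route. What is missing, and what causes the confusion in your ``main obstacle'' paragraph, is the precise form of the half-space computation. The identity is \emph{exact}, not approximate:
\[
\int_{H^-(y)\cap B_1(y)}\frac{(y-x)\cdot\nu(y)}{|x-y|^{d+s}}\,dx \;=\; \frac{\omega_{d-1}}{1-s}
\qquad\text{for every }y\in\partial^*E,
\]
obtained by a direct computation in coordinates (integrate $x_d/|x|^{d+s}$ over the upper half-ball). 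Combined with the divergence-theorem formula $\Per_s(E)=\frac{1}{s}\int_{\partial^*E}\int_E\frac{(y-x)\cdot\nu(y)}{|x-y|^{d+s}}\,dx\,d\mathcal{H}^{d-1}(y)$, this gives the clean decomposition
\[
\frac{\omega_{d-1}}{1-s}\Per(E)-\Per_s(E)
=\Bigl(\tfrac{1}{1-s}-\tfrac{1}{s(1-s)}\Bigr)\omega_{d-1}\Per(E)
+\tfrac{1}{s}\!\int_{\partial^*E}\!\int_{(E\Delta H^-(y))\cap B_1(y)}\!\tfrac{|(y-x)\cdot\nu(y)|}{|x-y|^{d+s}}
-\tfrac{1}{s}\!\int_{\partial^*E}\!\int_{E\setminus B_1(y)}\!\tfrac{(y-x)\cdot\nu(y)}{|x-y|^{d+s}}.
\]
The first term is just $-\frac{\omega_{d-1}}{s}\Per(E)\to-\omega_{d-1}\Per(E)$; no ``tail cancellation'' and no blow-up argument are involved. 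The half-space never appears outside $B_1(y)$ (its contribution there would be infinite, so your phrase ``outside $B_1$ the half-space and $E$ both contribute the full tail, these tails cancel'' does not make sense as written). The outside-$B_1$ integral then passes to the limit by straightforward dominated convergence using only $|E|<\infty$ and $\Per(E)<\infty$; the inside integral by monotone convergence in $s$. The De~Giorgi structure theorem and the $L^1$ blow-up you invoke are not needed anywhere: once the exact half-space identity is in hand, the remainder of the proof is the bookkeeping you already labelled ``steps (1), (2), (4)''. Also, drop the stray $(1-s)$ factors in your step~(2): the symmetric-difference integrand is $\frac{|(y-x)\cdot\nu(y)|}{|x-y|^{d+s}}$, with prefactor $\frac{1}{s}$, not $(1-s)$.
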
 

\begin{proof} 
We can write $\Per_s(E)$  as a boundary integral observing that for all $0<s< 1$
  \begin{equation}\label{div}\div_y\left(\frac{y-x}{|y-x|^{d+s}}\right)=-s\frac{1}{|y-x|^{d+s}}.    \end{equation}
So, by  the divergence theorem, \eqref{pers1} reads \begin{align}\label{pers2} 
\Per_s(E) =& \frac{1}{s }\int_{\partial^* E}\int_{E} \frac{(y-x)\cdot \nu(y)}{|x-y|^{d+s} } dxd\mathcal{H}^{d-1}(y)\\\nonumber 
=& \frac{1}{s }\int_{\partial^*  E}\int_{E\cap B_1(y)} \frac{(y-x)\cdot \nu(y)}{|x-y|^{d+s} }dxd\mathcal{H}^{d-1}(y)\\ &+ \frac{1}{s }\int_{\partial^* E}\int_{E\setminus B_1(y)} \frac{(y-x)\cdot \nu(y)}{|x-y|^{d+s} } dxd\mathcal{H}^{d-1}(y)\nonumber
\end{align}
where $\nu(y)$ is the outer normal at $\partial^* E$ in $y$ and $R>0$.

We fix now  $y\in \partial^* E$     and we observe that, since $H^-(y):=\{x\in\R^d \ |\ (y-x)\cdot \nu(y)>0\}$, 
\begin{align} \label{c1} & \int_{E\cap B_1(y)} \frac{(y-x)\cdot \nu(y)}{|x-y|^{d+s} } dx \\ \nonumber =& \int_{H^-(y)\cap B_{1}(y) }   \frac{(y-x)\cdot \nu(y)}{|x-y|^{d+s} } dx+ 
 \int_{(E\setminus H^-(y)) \cap B_{1}(y) }   \frac{(y-x)\cdot \nu(y)}{|x-y|^{d+s} } dx\\ \nonumber &- \int_{(H^-(y)\setminus E) \cap B_{1}(y) }   \frac{(y-x)\cdot \nu(y)}{|x-y|^{d+s} } dx\\ \nonumber 
 =& \int_{H^-(y)\cap B_{1}(y) }   \frac{(y-x)\cdot \nu(y)}{|x-y|^{d+s} } dx-  \int_{(E\Delta H^-(y)) \cap B_{1}(y) }   \frac{|(y-x)\cdot \nu(y)|}{|x-y|^{d+s} } dx.   \end{align}
Now  we compute, denoting by $B'_1$ the ball in $\R^{d-1}$ with radius $1$ (and center $0$),  
\begin{align}  \label{c2} \int_{H^-(y)\cap B_{1}(y) }   \frac{(y-x)\cdot \nu(y)}{|x-y|^{d+s} } dx&  
 = \int_{\{x_d\geq 0\}\cap B_{1}} \frac{x_d}{|x|^{d+s}}dx\\ \nonumber &= \int_{B_{1}'} \int_0^{\sqrt{1-|x'|^2}} \frac{x_d}{(x_d^2+|x'|^2)^{(d+s)/2}}dx_d \\\nonumber &=\int_{B'_{1}} \frac{1}{2-d-s} (1-|x'|^{2-d-s})dx' = \omega_{d-1}\frac{1}{1-s}. \end{align}
If we substitute \eqref{c2}  in \eqref{c1} we get
\begin{equation}\label{c4} 
\int_{E\cap B_1(y)} \frac{(y-x)\cdot \nu(y)}{|x-y|^{d+s} } dx=  \frac{\omega_{d-1}}{1-s}-  \int_{(E\Delta H^-(y) )\cap B_{1}(y) }   \frac{|(y-x)\cdot \nu(y)|}{|x-y|^{d+s} } dx. 
\end{equation}   
By \eqref{pers2} and  \eqref{c4}  we obtain
\begin{align}\label{ultimo}  \omega_{d-1}\frac{ \Per(E)}{(1-s)} -\Per_s(E)= &  \omega_{d-1}\frac{ \Per(E)}{(1-s)}
-\omega_{d-1}\frac{\Per(E)}{s(1-s)}\\ &+ \frac{1}{s}\int_{\partial^* E} \int_{(E\Delta H^-(y)) \cap B_{1}(y) }   \frac{|(y-x)\cdot \nu(y)|}{|x-y|^{d+s} }dxd\mathcal{H}^{d-1}(y) \nonumber \\&-\frac{1}{s}\int_{\partial^* E} \ \int_{E\setminus B_1(y)} \frac{(y-x)\cdot \nu(y)}{|x-y|^{d+s} }dxd\mathcal{H}^{d-1}(y).
 \nonumber \end{align} 
Now we observe that, by Lebesgue's dominated convergence theorem, there holds 
\begin{equation}\label{fuori}  \lim_{s\to 1^-}\frac{1}{s} \int_{\partial^* E} \int_{E\setminus B_1(y)} \frac{(y-x)\cdot \nu(y)}{|x-y|^{d+s} } dxd\mathcal{H}^{d-1}(y)=   \int_{\partial^* E} \int_{E\setminus B_1(y)} \frac{(y-x)\cdot \nu(y)}{|x-y|^{d+1} } dxd\mathcal{H}^{d-1}(y) . \end{equation}
Moreover, by the monotone convergence theorem, 
\begin{align}\label{dentro} \lim_{s\to 1^-}  \int_{(E\Delta H^-(y))\cap B_1(y)  }  \frac{|(y-x)\cdot \nu(y)|}{|x-y|^{d+s} } dx =  \int_{(E\Delta H^-(y))\cap B_{1}(y) }  \frac{|(y-x)\cdot \nu(y)|}{|x-y|^{d+1} } dx \end{align} if  $ \frac{|(y-x)\cdot \nu(y)|}{|x-y|^{d+1} }\in L^1((E\Delta H^-(y))\cap B_1(y))$ and $ \lim_{s\to 1^-}  \int_{(E\Delta H^-(y))\cap B_1(y)  }  \frac{|(y-x)\cdot \nu(y)|}{|x-y|^{d+s} } dx= +\infty $ otherwise. 
The conclusion then follows from  \eqref{ultimo}, \eqref{fuori},    \eqref{dentro} sending $s\to 1^-$. 
\end{proof} 

\begin{lemma}\label{lemmamono} 
For  $s\in (0,1)$ and $E\subseteq\R^d$ of finite measure, we define the functional 
 \begin{align}\label{fis}   \mathcal{F}_s(E):= \begin{cases}  s\left[\frac{\omega_{d-1}}{1-s} \Per(E) -\Per_s(E)-\int_{E} \int_{E\setminus B_1(x)} \frac{1}{|x-y|^{d+s} }  dydx\right]& \text{ if $\Per(E)<+\infty$}\\ +\infty& \text{otherwise}.\end{cases}\end{align}
 
Then the following holds:
\begin{enumerate} 
\item The map  $s\mapsto \mathcal{F}_s(E)$ is monotone increasing as $s\to 1^-$. Moreover,  for every    $E$ of finite perimeter  
\begin{align*}\lim_{s\to 1^-}\mathcal{F}_s(E)=& -\omega_{d-1}\Per(E) + \int_{\partial^* E}\int_{(E\Delta H^-(y))\cap B_{1}(y) }  \frac{|(y-x)\cdot \nu(y)|}{|x-y|^{d+1} } dxd\mathcal{H}^{d-1}(y)\\  & - \int_{E} \int_{  \partial B_1(x)\cap E}  d\mathcal{H}^{d-1}(y)dx.\end{align*}
\item  For every family of  sets $E_s$  such that  $\mathcal{F}_s(E_s)\leq C$, for some $C>0$ independent of $s$, and $E_s\to E$  in $L^1$, there holds
\begin{align*}\liminf_{s\to1} \mathcal{F}_s(E_s) \geq & - \omega_{d-1}\Per(E)\\  &+ \int_{\partial^*E}\int_{(E\Delta H^-(y))\cap B_{1}(y) }  \frac{|(y-x)\cdot \nu(y)|}{|x-y|^{d+1} } dxd\mathcal{H}^{d-1}(y)- \int_{E} \int_{  \partial B_1(x)\cap E}  d\mathcal{H}^{d-1}(y)dx.\end{align*} \end{enumerate}
\end{lemma}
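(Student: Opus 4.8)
The plan is to read off both statements from the representation formula \eqref{ultimo} obtained in the proof of Proposition \ref{pointwise}. First I would substitute the definition of $\Per_s(E)$ as a boundary integral \eqref{pers2} into \eqref{fis}; combining the ``far'' part of \eqref{pers2} with the subtracted term $\int_E\int_{E\setminus B_1(x)}|x-y|^{-d-s}\,dy\,dx$ via the integration by parts \eqref{perparti} (with exponent $d+s$ in place of $d+1$), one sees that all the ``outside $B_1$'' contributions collapse to $-\int_E\int_{\partial B_1(x)\cap E}d\mathcal H^{d-1}(y)\,dx$, which carries no $s$-dependence. Consequently
\[
\mathcal F_s(E)=-\frac{\omega_{d-1}}{1-s}\Per(E)+\frac{\omega_{d-1}}{1-s}\Per(E)-\omega_{d-1}\Per(E)+\int_{\partial^*E}\int_{(E\Delta H^-(y))\cap B_1(y)}\frac{|(y-x)\cdot\nu(y)|}{|x-y|^{d+s}}\,dx\,d\mathcal H^{d-1}(y)-\int_E\int_{\partial B_1(x)\cap E}d\mathcal H^{d-1}(y)\,dx,
\]
so that modulo the two terms that do not depend on $s$, $\mathcal F_s(E)$ equals the single nonlocal integral $I_s(E):=\int_{\partial^*E}\int_{(E\Delta H^-(y))\cap B_1(y)}\frac{|(y-x)\cdot\nu(y)|}{|x-y|^{d+s}}\,dx\,d\mathcal H^{d-1}(y)$.

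For part (1), monotonicity in $s$ is now immediate: the integrand $|(y-x)\cdot\nu(y)|\,|x-y|^{-d-s}$ is nonnegative and, since $|x-y|\le 1$ on the domain of integration, the map $s\mapsto|x-y|^{-d-s}=|x-y|^{-d}|x-y|^{-s}$ is pointwise nondecreasing in $s$; hence $s\mapsto I_s(E)$ is nondecreasing, and therefore so is $s\mapsto\mathcal F_s(E)$. The limit as $s\to1^-$ then follows from the monotone convergence theorem applied to $I_s(E)$ (exactly as in \eqref{dentro}, but now also integrated in $y$ over $\partial^*E$, using Tonelli to handle the possibly infinite value), giving $\lim_{s\to1^-}I_s(E)=\int_{\partial^*E}\int_{(E\Delta H^-(y))\cap B_1(y)}\frac{|(y-x)\cdot\nu(y)|}{|x-y|^{d+1}}\,dx\,d\mathcal H^{d-1}(y)$ (finite or $+\infty$), which plugged back in yields precisely the claimed expression for $\lim_{s\to1^-}\mathcal F_s(E)$.

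For part (2), let $E_s\to E$ in $L^1$ with $\mathcal F_s(E_s)\le C$. Since the two $s$-independent terms are, respectively, continuous ($-\int_{E_s}\int_{\partial B_1(x)\cap E_s}d\mathcal H^{d-1}(y)\,dx\to-\int_E\int_{\partial B_1(x)\cap E}d\mathcal H^{d-1}(y)\,dx$ under $L^1$-convergence — this is a Fubini/continuity-of-translation argument, or one may invoke it as already used implicitly in the paper) and lower semicontinuous after a sign change ($-\omega_{d-1}\Per$ is upper semicontinuous, so one must be slightly careful), the crux is a lower-semicontinuity estimate for $I_s(E_s)$. Here I would fix $\sigma\in(0,1)$ and exploit monotonicity from part (1): for $s\ge\sigma$, $I_s(E_s)\ge I_\sigma(E_s)$, and $I_\sigma$ is lower semicontinuous with respect to $L^1$-convergence (its integrand is nonnegative and, after writing $\chi_{E_s\Delta H^-(y)}$, one can pass to the liminf by Fatou once the boundary term $d\mathcal H^{d-1}\lfloor\partial^*E_s$ is controlled — the uniform perimeter bound from Proposition \ref{compact}, valid since $\mathcal F_s(E_s)\le C$ forces $\mathcal P_s(E_s)$ bounded, supplies the needed compactness and allows passing to the limit in the surface measure). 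Taking $\liminf_{s\to1}$ then $\sigma\to1^-$ and combining with the behaviour of the other two terms gives the stated inequality. \textbf{The main obstacle} is precisely this last point: making rigorous the lower semicontinuity of the boundary integral $I_\sigma$ under mere $L^1$-convergence of the sets, i.e. controlling simultaneously the weak-$*$ convergence of $\mathcal H^{d-1}\lfloor\partial^*E_s$ to a measure dominating $\mathcal H^{d-1}\lfloor\partial^*E$ and the convergence of the inner integrals $\int_{(E_s\Delta H^-(y))\cap B_1(y)}|(y-x)\cdot\nu(y)|\,|x-y|^{-d-\sigma}\,dx$; a Reshetnyak-type lower-semicontinuity argument for the functional viewed on the pair (varifold/measure, integrand) is the natural tool, together with the uniform perimeter bound to localize and avoid loss of mass.
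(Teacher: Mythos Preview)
Your treatment of part (1) is correct and essentially identical to the paper's: both derive the representation
\[
\mathcal F_s(E)=-\omega_{d-1}\Per(E)-\int_E\int_{\partial B_1(x)\cap E}d\mathcal H^{d-1}(y)\,dx+I_s(E),
\]
with $I_s(E)=\int_{\partial^*E}\int_{(E\Delta H^-(y))\cap B_1(y)}\frac{|(y-x)\cdot\nu(y)|}{|x-y|^{d+s}}\,dx\,d\mathcal H^{d-1}(y)$, read off monotonicity from $|x-y|\le 1$, and take the limit by monotone convergence.

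For part (2), however, your decomposition creates the very obstacle you flag. Splitting $\mathcal F_s(E_s)$ into $-\omega_{d-1}\Per(E_s)$, $I_s(E_s)$, and the sphere term forces you to bound the $\liminf$ of a sum by a sum involving a term with the \emph{wrong} sign: $-\omega_{d-1}\Per(E_s)$ is only upper semicontinuous, so you cannot pass to the $\liminf$ termwise. You would need the stronger statement $\liminf_s\bigl(I_\sigma(E_s)-\omega_{d-1}\Per(E_s)\bigr)\ge I_\sigma(E)-\omega_{d-1}\Per(E)$, and the Reshetnyak-type argument you sketch for $I_\sigma$ alone does not deliver this (the inner integrand depends on both $\nu(y)$ and on $E_s$ through $E_s\Delta H^-(y)$, so it is not a standard Reshetnyak integrand either).

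The paper sidesteps all of this by \emph{not} passing to the boundary-integral form when proving lower semicontinuity. After using monotonicity to get $\mathcal F_s(E_s)\ge\mathcal F_{\bar s}(E_s)$ for fixed $\bar s<s$, it reverts to the \emph{original} definition
\[
\mathcal F_{\bar s}(E_s)=\bar s\,\mathcal P_{\bar s}(E_s)-\bar s\int_{E_s}\int_{E_s\setminus B_1(x)}\frac{dy\,dx}{|x-y|^{d+\bar s}}.
\]
Now $\mathcal P_{\bar s}$ is lower semicontinuous as a whole (Proposition \ref{compact}: $\Per$ is l.s.c.\ and $\Per_{\bar s}$ is continuous along $L^1$-convergent sequences with bounded perimeter), and the subtracted volume double integral is continuous under $L^1$-convergence by dominated convergence. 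Hence $\liminf_s\mathcal F_{\bar s}(E_s)\ge\mathcal F_{\bar s}(E)$, and one finishes by sending $\bar s\to 1^-$ via part (1). The point is that the problematic pairing $\Per-\Per_{\bar s}$ (good sign on the l.s.c.\ piece, bad sign on the continuous piece) is exactly the right one, whereas your pairing $I_\sigma-\Per$ puts the bad sign on the l.s.c.\ piece.
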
 

\begin{proof} 
\begin{enumerate}\item 
Arguing as in  \eqref{perparti}   and using \eqref{div},  we get 
\begin{align*} \mathcal{F}_s(E)= & s\Big[\frac{\omega_{d-1}}{1-s} \Per(E) -\Per_s(E)\\&+\frac{1}{s}\int_{\partial^* E} \int_{E\setminus B_1(y)} \frac{(y-x)\cdot \nu(y)}{|x-y|^{d+s} } dxd\mathcal{H}^{d-1}(y) - \frac{1}{s}\int_{E} \int_{  \partial B_1(x)\cap E}  d\mathcal{H}^{d-1}(y)dx\Big].\end{align*}
Therefore from \eqref{pers2},  and \eqref{c4},   we get for $0<\bar s<s<1$
\begin{align*} &\frac{\mathcal{F}_s(E)+ \int_{E} \int_{  \partial B_1(x)\cap E}  d\mathcal{H}^{d-1}(y)dx}{s}\\ =&\omega_{d-1}\frac{ \Per(E)}{(1-s)}-\Per_s(E) +\frac{1}{s} \int_{\partial^* E} \int_{E\setminus B_1(y)} \frac{(y-x)\cdot \nu(y)}{|x-y|^{d+s} } dxd\mathcal{H}^{d-1}(y)\\ =& \omega_{d-1}\frac{ \Per(E)}{(1-s)}-
\frac{1}{s}  \int_{\partial^* E}\int_{E\cap B_1(y)} \frac{(y-x)\cdot \nu(y)}{|x-y|^{d+s} } dxd\mathcal{H}^{d-1}(y) \nonumber \\  =& -\frac{\omega_{d-1}}{s}  \Per(E)  +\frac{1}{s}\int_{\partial^* E}  \int_{(E\Delta H^-(y)) \cap B_{1}(y) }   \frac{|(y-x)\cdot \nu(y)|}{|x-y|^{d+s} } dxd\mathcal{H}^{d-1}(y)
\\ >& -\frac{\omega_{d-1}}{s}  \Per(E)  +\frac{1}{s}\int_{\partial^* E}  \int_{(E\Delta H^-(y) ) \cap B_{1}(y) }   \frac{|(y-x)\cdot \nu(y)|}{|x-y|^{d+\bar s} } dxd\mathcal{H}^{d-1}(y)\\
=& \frac{\mathcal{F}_{\bar s}(E)+ \int_{E} \int_{  \partial B_1(x)\cap E}  d\mathcal{H}^{d-1}(y)dx}{s}, 
\end{align*} 
which gives the desired monotonicity. 

Now we observe that  by the dominated convergence for every $E$ with $|E|<+\infty$ and $\Per(E)<+\infty$,  
\begin{align*} &\lim_{s\to 1}\frac{1}{s}\int_{\partial^* E} \int_{E\setminus B_1(y)} \frac{(y-x)\cdot \nu(y)}{|x-y|^{d+s} } dxd\mathcal{H}^{d-1}(y) - \frac{1}{s}\int_{E} \int_{  \partial B_1(x)\cap E}  d\mathcal{H}^{d-1}(y)dx\\ &= \int_{\partial^* E} \int_{E\setminus B_1(y)} \frac{(y-x)\cdot \nu(y)}{|x-y|^{d+1} } dxd\mathcal{H}^{d-1}(y) -\int_{E} \int_{  \partial B_1(x)\cap E}  d\mathcal{H}^{d-1}(y)dx \end{align*}
So, we conclude by Proposition \ref{pointwise}. 

\item We fix a family of sets $E_s$ such that $\mathcal{F}_s(E_s)\leq C$ and $E_s\to E$ in $L^1$ as $s\to 1^-$.  
Fix $\bar s<1$ and observe that by the monotonicity property proved in item (i), we get 
\begin{align*}&\liminf_{s\to 1}\mathcal{F}_s(E_s)\geq \liminf_{s\to 1}\mathcal{F}_{\bar s}(E_s)\\ \geq& \liminf_{s\to 1}\bar s\left[\frac{\omega_{d-1}}{1-\bar s} \Per( E_s) -\Per_{\bar s}(E_s)\right]-\lim_{s\to 1}\bar s \int_{E_s} \int_{E_s\setminus B_1(x)} \frac{1}{|x-y|^{d+\bar s} }  dydx\\
 \geq& \bar s\left[\frac{\omega_{d-1}}{1-\bar s} \Per(E) -\Per_{\bar s}(E)\right]- \bar s \int_{E} \int_{E\setminus B_1(x)} \frac{1}{|x-y|^{d+\bar s} }  dydy= \mathcal{F}_{\bar s}(E)
 \end{align*}
where we used for the first limit  the lower semicontinuity proved in Proposition \ref{compact},  and the dominated convergence theorem  for the second limit. 

We conclude by item (i), observing that $\mathcal{F}_{\bar s}(E)<C$, and   sending $\bar s\to 1^-$. 
 \end{enumerate} 
 \end{proof} 
 
We are now ready to prove our main result. 
 
\begin{proof}[Proof of Theorem \ref{gamma}] 
We start with the $\Gamma$-liminf inequality. Let $E_s$ be a sequence of sets such that  
$E_s\to E$ in $L^1$. 
We will prove that 
\[\liminf_{s\to 1} s\left[\frac{\omega_{d-1}}{1-s} \Per(  E_s) -\Per_s(E_s)\right] \geq \mathcal{H}(E),\] 
which will give immediately  the conclusion.  Recalling the definition of $\mathcal{F}_s(E)$ given in \eqref{fis}, we have that 
\[\liminf_{s\to 1} s\left[\frac{\omega_{d-1}}{1-s} \Per( E_s) -\Per_s(E_s)\right] \geq \liminf_{s\to 1}\mathcal{F}_s(E_s)+\liminf_{s\to 1} s\int_{E_s}\int_{E_s\setminus B_1(x)} \frac{1}{|x-y|^{d+s}}dydx.\] 
By Proposition \ref{lemmamono}, item (ii) and by Fatou lemma, we get 
\begin{align*}& \liminf_{s\to 1} s\left[\frac{\omega_{d-1}}{1-s} \Per( E_s) -\Per_s(E_s)\right] \geq -\omega_{d-1}\Per(E)\\  +& \int_{\partial^* E}\int_{(E\Delta H^-(y))\cap B_{1}(y) }  \frac{|(y-x)\cdot \nu(y)|}{|x-y|^{d+1} } dxd\mathcal{H}^{d-1}(y)- \int_{E} \int_{  \partial B_1(x)\cap E}  d\mathcal{H}^{d-1}(y)dx\\ 
+& \int_{E}\int_{E\setminus B_1(x)} \frac{1}{|x-y|^{d+1}}dydx=\mathcal{H}(E)\end{align*}
where the last equality comes from \eqref{perparti}.

The $\Gamma$-limsup  is a consequence of  the pointwise limit in Proposition \ref{pointwise}.  \end{proof}

We conclude this section with the equi-coercivity of the family of functionals $\mathcal{P}_s$, which is   a consequence of the monotonicity property of $\mathcal{F}_s$ obtained in Lemma \ref{lemmamono}. 

\begin{proposition}[Equi-coercivity]\label{equicoe}  Let $s_n$ be a sequence of positive  numbers  with $s_n\to 1^-$,  let $m,\,C\in\R$ with $m>0$,
and let  $E_n$ be a sequence of  measurable sets such that $|E_n|\leq m$  and  $\mathcal{P}_{s_n}(E_n)\leq C$ for all $n\in\mathbb N$.

Then  $\Per( E_n)\leq C'$ for some $C'>0$ depending on $C, d, m$, and  the sequence $E_n$ converges in $L^1_{\rm loc}$, up to a subsequence, to a limit set $E$ of finite perimeter, with  $|E|\leq m$.
\end{proposition}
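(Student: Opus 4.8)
The plan is to exploit the $s$-monotonicity of the auxiliary functional $\mathcal F_s$ from Lemma~\ref{lemmamono}(i) in order to transfer the energy bound from the moving parameters $s_n$ to a single fixed parameter $\bar s=\tfrac12$, at which the crude interpolation estimate already used in the proof of Proposition~\ref{compact} yields a perimeter bound whose constant does not degenerate as $s\to 1^-$. First I would record the immediate consequences of the hypothesis: since $\mathcal P_{s_n}(E_n)\le C<+\infty$ forces $\Per(E_n)<+\infty$ by the definition \eqref{ps}, and since by \eqref{fis}
\[
\mathcal F_{s_n}(E_n)=s_n\,\mathcal P_{s_n}(E_n)-s_n\int_{E_n}\int_{E_n\setminus B_1(x)}\frac{1}{|x-y|^{d+s_n}}\,dy\,dx
\]
with the double integral nonnegative and $s_n\in(0,1)$, one gets $\mathcal F_{s_n}(E_n)\le s_n\,\mathcal P_{s_n}(E_n)\le|C|$ for every $n$.

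Since $s_n\to 1^-$, after discarding finitely many indices — which affects neither the conclusions nor the set of subsequential $L^1_{\rm loc}$-limits — we may assume $s_n\ge\tfrac12$ for all $n$, so that Lemma~\ref{lemmamono}(i) gives $\mathcal F_{1/2}(E_n)\le\mathcal F_{s_n}(E_n)\le|C|$. Rewriting \eqref{fis} at $\bar s=\tfrac12$ as
\[
\omega_{d-1}\Per(E_n)=\mathcal F_{1/2}(E_n)+\tfrac12\,\Per_{1/2}(E_n)+\tfrac12\int_{E_n}\int_{E_n\setminus B_1(x)}\frac{1}{|x-y|^{d+1/2}}\,dy\,dx,
\]
I would bound the three terms on the right separately: the first by $|C|$; the third, by integrating in $y$ over all of $\R^d\setminus B_1(x)$, by $\tfrac12\,|E_n|\int_{\R^d\setminus B_1(0)}|z|^{-d-1/2}\,dz=|E_n|\,d\omega_d\le m\,d\omega_d$; and the second, via the interpolation inequality $\Per_{1/2}(E_n)\le 2\sqrt2\,d\omega_d\,\Per(E_n)^{1/2}|E_n|^{1/2}$ from \cite[Lemma~4.4]{blp} together with $|E_n|\le m$, by $\sqrt2\,d\omega_d\,\sqrt m\,\sqrt{\Per(E_n)}$. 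This produces $\omega_{d-1}\Per(E_n)\le|C|+m\,d\omega_d+\sqrt2\,d\omega_d\,\sqrt m\,\sqrt{\Per(E_n)}$, and absorbing the last, sublinear, term by Young's inequality gives $\Per(E_n)\le C'$ with $C'$ depending only on $C,d,m$.

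Once the uniform bounds $\Per(E_n)\le C'$ and $|E_n|\le m$ are in force, the remaining assertion is exactly the compactness part of Proposition~\ref{compact}: by the $L^1_{\rm loc}$-compactness of sets of finite perimeter a subsequence converges in $L^1_{\rm loc}$ to some $E$, lower semicontinuity of the perimeter gives $\Per(E)\le C'<+\infty$, and Fatou's lemma along an a.e.\ convergent subsequence gives $|E|\le m$. I expect the only genuinely delicate point to be the reduction in the first step: one must argue through $\mathcal F_s$ and not through $\mathcal P_s$ itself — which is not monotone in $s$ — so that the $s$-monotonicity can carry the uniform bound down to the fixed value $\bar s=\tfrac12$, where the interpolation exponent $\tfrac12$ is bounded away from $1$ and the resulting estimate is sublinear in $\Per(E_n)$; at the moving exponents $s_n\to 1^-$ the analogous estimate becomes asymptotically linear and cannot be absorbed.
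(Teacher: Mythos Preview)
Your argument is correct and follows essentially the same strategy as the paper: bound $\mathcal F_{s_n}(E_n)$ by $|C|$ using the nonnegativity of the tail integral, invoke the $s$-monotonicity of $\mathcal F_s$ from Lemma~\ref{lemmamono}(i) to pass to a fixed parameter bounded away from $1$, and then conclude via the interpolation inequality. The only cosmetic difference is that the paper freezes at $\bar s=s_{\bar n}$ (an element of the sequence with $s_{\bar n}>\tfrac12$), bounds the tail integral crudely by $|E_n|^2\le m^2$, and then defers the perimeter estimate to Proposition~\ref{compact}, whereas you freeze at the explicit value $\bar s=\tfrac12$ and carry out the interpolation and Young steps directly; both routes are equivalent.
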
  

\begin{proof} 
Reasoning as in Proposition \ref{compact}, we get that $E_n$ has finite perimeter, for every $n\in \mathbb N$. 
Recalling \eqref{fis}, we get that 
\[ |C|\geq  s_n \mathcal{P}_{s_n}(E_n)=\mathcal{F}_{s_n}(E_n)+s_n\int_{E_n}\int_{E_n\setminus B_1(x)}\frac{1}{|x-y|^{d+s_n}}dydx\geq \mathcal{F}_{s_n}(E_n).\] 
We fix now $\bar n$ such that $s_{\bar n}>\frac{1}{2}$ and we claim that there exists $C'$, depending on $m, d$ but independent of $n$, such that $\Per( E_n)\leq C'$ for every $n\geq \bar n$.  If the claim is true, then it is immediate to conclude that eventually enlarging $C'$, $\Per( E_n)\leq C'$ for every $n$. 

For every $n\geq \bar n$, we use  the monotonicity  of the map $s\mapsto \mathcal{F}_s(E_n)$ proved in Lemma \ref{lemmamono},  and the fact that $|E_n|\leq m$, to obtain that 
\begin{align*} |C|&\geq \mathcal{F}_{s_n}(E_n)\geq \mathcal{F}_{s_{\bar n}}(E_n)= s_{\bar n}\mathcal{P}_{s_{\bar n}}(E_n)-s_{\bar n}\int_{E_n}\int_{E_n\setminus B_1(x)}\frac{1}{|x-y|^{d+s_{\bar n}}}dydx\\ & \geq  s_{\bar n}\mathcal{P}_{s_{\bar n}}(E_n)-s_{\bar n}\int_{E_n}\int_{E_n\setminus B_1(x)}dydx\geq s_{\bar n}\mathcal{P}_{s_{\bar n}}(E_n)-s_{\bar n}|E_n|^2\geq s_{\bar n}\mathcal{P}_{s_{\bar n}}(E_n)-s_{\bar n}m^2.
\end{align*}  
This implies in particular that $\mathcal{P}_{s_{\bar n}}(E_n)\leq \frac{|C|}{s_{\bar n}}+ m^2\leq 2|C|+m^2$, and  we conclude by Proposition \ref{compact}. 
\end{proof} 

\begin{remark}[Isoperimetric problems] \upshape \label{isorem}
Let us consider the following isoperimetric-type problem
for the functionals $\mathcal{P}_s$ and $\mathcal{H}$:
\begin{eqnarray}\label{iso} \min_{|E|=m}\mathcal{P}_s(E)\\\label{iso2} 
   \min_{|E|=m}\mathcal{H}(E),\end{eqnarray} 
   where $m>0$ is a fixed constant.
Observe that  $\widetilde E$ is a minimizer of \eqref{iso} if and only if the rescaled set $m^{-\frac{1}{d}}\widetilde E$ is a minimizer of 
\begin{equation*}\label{isoresc} 
\min_{|E|=1} \frac{\omega_{d-1}}{1-s} \Per(E)-m^{\frac{1-s}{d}} \Per_s(E).
 \end{equation*} 
Note in particular that the functional $\mathcal{P}_s$  is  given by the sum of an attractive term, which is the perimeter functional, 
and a repulsive term given by the fractional perimeter with a negative sign.

In general we cannot expect existence of solutions to these problems for every value of $m$.
However, from \cite[Thm 1.1, Thm 1.2]{dnrv} it follows that there exist $0<m_2(s)\leq m_1(s)$ such that, for all $m<m_1(s)$, 
Problem   \eqref{iso} admits a solution and moreover, if $m<m_2(s)$, the unique solution (uo to translations) is the ball of volume $m$.  
Actually, the  bounds $m_1(s), m_2(s)$ tend to $0$ as $s\to 1^-$, hence these results cannot be extended directly
to Problem \eqref{iso2}. 

A weaker notion of solution, introduced in \cite{kmn}, are the so-called generalized minimizers, that is, 
minimizers of the functional $\sum_i \mathcal{P}_s(E_i)$ (resp. of $\sum_i \mathcal{H}(E_i)$), among sequences of sets 
$(E_i)_{i}$  such that $|E_i|>0$ and $P(E_i)<+\infty$ for finitely many $i$'s, and $\sum_i |E_i|=m$. Note that, 
if  $E_n$ is a minimizing sequence for \eqref{iso} or \eqref{iso2},  
by reasoning as in Proposition \ref{equicoe}, we get that  there exists a constant $C=C(m)>0$ 
such that $\Per(E_n)\leq C$ for every $n$.  
Then, as it is proved in \cite[Proposition 2.1]{fl},  there exists $C'=C'(m)>0$, depending on $C$ and $m$,   
such that $\sup_x|E_n\cap B_1(x)|\geq C'$. Using these facts, reasoning as in \cite{kmn}, it is possible to show existence 
of generalized minimizers both for  \eqref{iso} and \eqref{iso2}, for every value of $m>0$. 
\end{remark}

\section{Properties of the limit functional}
In this section we analyze the main properties of  the limit functional $\mathcal{H}$. Note that, since it is obtained as a $\Gamma$-limit, it is naturally lower semicontinuous with respect to $L^1$ convergence.  

First of all we observe that by the representation of $\mathcal{H}$ in \eqref{limit2}, for every $E$ with finite perimeter there holds
\begin{align} \label{h}
 -\omega_{d-1}\Per(E) -d\omega_d|E|  \leq   \mathcal{H}(E) &  \leq  \int_{\partial^* E}\int_{(E\Delta H^-(y))\cap B_{1}(y) }  \frac{|(y-x)\cdot \nu(y)|}{|x-y|^{d+1} } dxd\mathcal{H}^{d-1}(y)+  d\omega_d|E|
 \\
&  \leq  \int_{\partial^* E}\int_{(E\Delta H^-(y))\cap B_{1}(y) }  \frac{1}{|x-y|^{d} } dxd\mathcal{H}^{d-1}(y)+  d\omega_d|E|. \nonumber 
\end{align}

We start with a compactness property in $L^1$ for sublevel sets of $\mathcal{H}$, which follows from  a lower bound on $\mathcal{H}$
in terms of the perimeter.

\begin{proposition} \label{coe} Let $E\subseteq \R^d$ be such that $\mathcal{H}(E)\leq C$. Then there exists a constant $C'$ depending on $C,|E|,d$ such that  $\Per(E)\leq C'$. 

In particular, if $E_n$ is a sequence of sets such that $\mathcal{H}(E_n)\leq C$, then there exists a limit set
$E$ of finite perimeter such that $\mathcal{H}(E)\leq C$  
and $E_n\to E$ in $L^1_{\rm loc}$ as $n\to +\infty$, up to a subsequence.
\end{proposition}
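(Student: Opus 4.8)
The statement hinges on extracting from $\mathcal{H}$ a genuine coercivity bound of the form $\mathcal{H}(E)\ge \omega_{d-1}\Per(E)-c(d)\,\Per(E)^{1/2}|E|^{1/2}-c(d)|E|$, after which Young's inequality closes the argument. The plan is to obtain such a bound by freezing a convenient exponent strictly below $1$ inside the family $\mathcal{F}_s$. First I would note that $\mathcal{H}(E)\le C$ already forces $\Per(E)<+\infty$, since otherwise $\mathcal{H}(E)=+\infty$ by the very definition \eqref{limit}; so from now on $E$ has finite perimeter and all the quantities below are finite.

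Next, combining Lemma \ref{lemmamono}(i) with the representation \eqref{limit2} one gets, for every $E$ of finite perimeter,
\[
\mathcal{H}(E)=\lim_{s\to1^-}\mathcal{F}_s(E)+\int_E\int_{E\setminus B_1(x)}\frac{1}{|x-y|^{d+1}}\,dy\,dx\ \ge\ \lim_{s\to1^-}\mathcal{F}_s(E)\ \ge\ \mathcal{F}_{\bar s}(E),
\]
for every fixed $\bar s\in(0,1)$, the last inequality being the monotonicity of $s\mapsto\mathcal{F}_s(E)$ and the middle one using that the displayed integral is nonnegative. Choosing $\bar s=\tfrac12$ and unfolding \eqref{fis} this reads
\[
C\ \ge\ \mathcal{H}(E)\ \ge\ \omega_{d-1}\Per(E)-\tfrac12\Per_{1/2}(E)-\tfrac12\int_E\int_{E\setminus B_1(x)}\frac{1}{|x-y|^{d+1/2}}\,dy\,dx.
\]
Then I would estimate $\int_E\int_{E\setminus B_1(x)}|x-y|^{-(d+1/2)}\,dy\,dx\le\int_E\int_{\R^d\setminus B_1(x)}|x-y|^{-(d+1/2)}\,dy\,dx=2d\omega_d|E|$ and, exactly as in Proposition \ref{compact}, $\Per_{1/2}(E)\le 2\sqrt2\,d\omega_d\,\Per(E)^{1/2}|E|^{1/2}$ by the interpolation inequality of \cite[Lemma 4.4]{blp}. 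Substituting and using Young's inequality $\sqrt2\,d\omega_d\,\Per(E)^{1/2}|E|^{1/2}\le\tfrac{\omega_{d-1}}{2}\Per(E)+\tfrac{d^2\omega_d^2}{\omega_{d-1}}|E|$ gives $\Per(E)\le\tfrac{2}{\omega_{d-1}}\big(C+d\omega_d|E|+\tfrac{d^2\omega_d^2}{\omega_{d-1}}|E|\big)=:C'$, depending only on $C,|E|,d$.

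For the second assertion, applying the above to each $E_n$ (whose volumes are uniformly bounded) turns the estimate into a uniform perimeter bound $\Per(E_n)\le C'$; by the $L^1_{\rm loc}$-compactness of sets of finite perimeter a subsequence converges in $L^1_{\rm loc}$ to some $E$ with $\Per(E)<+\infty$, and the lower semicontinuity of $\mathcal{H}$ (it is a $\Gamma$-limit) then gives $\mathcal{H}(E)\le\liminf_n\mathcal{H}(E_n)\le C$. I expect the only delicate point to be the passage from the $E$-dependent near-$1$ exponent coming from the pointwise statement (Proposition \ref{pointwise}) to the fixed exponent $\bar s=\tfrac12$: this is precisely what the monotonicity of $\mathcal{F}_s$ in Lemma \ref{lemmamono}(i) is there for, and it is what makes the constant $C'$ independent of $n$.
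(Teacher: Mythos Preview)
Your argument is correct and follows the same strategy as the paper: use the monotonicity of $s\mapsto\mathcal{F}_s(E)$ from Lemma~\ref{lemmamono}(i) together with $\mathcal{H}(E)\ge\mathcal{F}_s(E)$ to freeze at a fixed exponent, then extract the perimeter bound via the interpolation inequality. The paper simply packages the last step by invoking Propositions~\ref{equicoe} and~\ref{compact}, whereas you spell it out explicitly with $\bar s=\tfrac12$ and Young's inequality; your parenthetical remark that a uniform volume bound on the $E_n$ is needed for the second assertion is a point the paper leaves implicit.
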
 

\begin{proof} 
By Lemma \ref{lemmamono}, for $s\in (0,1)$ there holds
\[ \mathcal{F}_s(E)\leq \mathcal{H}(E)-\int_{E}\int_{E_n\setminus B_1(y)} \frac{1}{|x-y|^{d+1}}dxdy\leq \mathcal{H}(E)\leq C.
\]
The estimate on $P(E_n)$ then follows  by Proposition \ref{equicoe}. 
 
The second statement is a direct consequence of the lower semicontinuity of $\mathcal{H}$,
and of the local compactness in $L^1$ of sets of finite perimeter.
\end{proof} 

We point out  the following rescaling property of the functional $\mathcal{H}$, the will allow us to consider only sets with diameter less than $1$. 

\begin{proposition}
For every $\lambda>0$ there holds  \begin{equation}\label{resc}\mathcal{H}(\lambda E)=\lambda^{d-1}\mathcal{H}(E)-\omega_{d-1} \lambda^{d-1}\log \lambda \Per(E). \end{equation} 
\end{proposition}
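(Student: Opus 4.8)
The plan is to prove the rescaling identity \eqref{resc} by a direct change of variables in the defining formula \eqref{limit} for $\mathcal{H}$, tracking how each of the three terms transforms under $E \mapsto \lambda E$. The only subtlety is that the ball $B_1(y)$ appearing in the cutoff does \emph{not} rescale, so after the change of variables the domain of integration in the ``near'' term becomes $B_{1/\lambda}$ rather than $B_1$; the discrepancy between the two is exactly what produces the extra logarithmic term on the right-hand side.

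First I would record the elementary transformation rules. If $y \in \partial^* E$ corresponds to $\tilde y = \lambda y \in \partial^*(\lambda E)$, then $\nu(\tilde y) = \nu(y)$, the surface measure satisfies $d\mathcal{H}^{d-1}(\tilde y) = \lambda^{d-1} d\mathcal{H}^{d-1}(y)$, and $\Per(\lambda E) = \lambda^{d-1}\Per(E)$. For the inner integrals, substituting $x = \lambda \xi$ gives $dx = \lambda^d d\xi$, $|\tilde x - \tilde y| = \lambda|\xi - y|$, $(\tilde y - \tilde x)\cdot\nu(\tilde y) = \lambda (y-\xi)\cdot\nu(y)$, and $(\lambda E)\,\Delta\, H^-(\tilde y) = \lambda\bigl(E \,\Delta\, H^-(y)\bigr)$ since $H^-$ is scale-invariant and commutes with the symmetric difference. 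Hence a kernel of the form $\frac{|(\tilde y-\tilde x)\cdot\nu(\tilde y)|}{|\tilde x-\tilde y|^{d+1}}$ picks up a factor $\lambda^{1-(d+1)} = \lambda^{-d}$, which cancels the $\lambda^d$ from $dx$, leaving only the $\lambda^{d-1}$ from $d\mathcal{H}^{d-1}$. The same bookkeeping applies to the second term of \eqref{limit}, whose kernel $\frac{(\tilde y-\tilde x)\cdot\nu(\tilde y)}{|\tilde x-\tilde y|^{d+1}}$ also scales by $\lambda^{-d}$, and whose domain $(\lambda E)\setminus B_1(\tilde y)$ becomes $\lambda\bigl(E \setminus B_{1/\lambda}(y)\bigr)$.

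The heart of the argument is therefore to compare, for fixed $y\in\partial^* E$, the two expressions obtained by integrating over $B_1(y)$ and over $B_{1/\lambda}(y)$ respectively — equivalently, to evaluate the correction
\[
\int_{\partial^* E}\int_{(E\Delta H^-(y))\cap (B_1(y)\setminus B_{1/\lambda}(y))} \frac{|(y-x)\cdot\nu(y)|}{|x-y|^{d+1}}\,dx\,d\mathcal{H}^{d-1}(y)
\]
together with the matching change in the ``far'' term's domain. On the annulus $B_1(y)\setminus B_{1/\lambda}(y)$ the set $E$ agrees, up to a negligible set, with its tangent half-space $H^-(y)$ (this is precisely the measure-theoretic meaning of $y$ being a point of the reduced boundary, so that the symmetric difference localized to a small ball has density zero), so on that annulus the ``near'' integrand over $E\Delta H^-(y)$ contributes nothing in the limit and the bulk of the correction comes from reorganizing the half-space/full-space integrals. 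Using the explicit computation already carried out in \eqref{c2}, namely $\int_{H^-(y)\cap B_r(y)}\frac{(y-x)\cdot\nu(y)}{|x-y|^{d+1}}dx = \omega_{d-1}\log r + \text{const}$ (the analogue of \eqref{c2} at $s=1$, where the exponent $2-d-s$ becomes $1-d$ and the integral produces a logarithm), one finds that changing the cutoff radius from $1$ to $1/\lambda$ shifts the relevant term by $\omega_{d-1}\log(1/\lambda) = -\omega_{d-1}\log\lambda$ per unit of surface measure, i.e. by $-\omega_{d-1}\log\lambda \cdot \Per(E)$ after integrating over $\partial^* E$ and including the $\lambda^{d-1}$ Jacobian. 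Collecting the overall $\lambda^{d-1}$ from the three main pieces and this logarithmic correction yields exactly \eqref{resc}.

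The main obstacle I anticipate is purely organizational rather than conceptual: one must be careful that the individual terms in \eqref{limit} may separately be divergent (only their combination, or the combination guaranteed finite by Proposition \ref{pointwise}, is summable), so the change of variables and the splitting of the annular correction should be performed on a representation of $\mathcal{H}$ where absolute convergence is manifest — for instance the form \eqref{limit2}, or by first regularizing with the exponent $d+s$, $s<1$, applying the scaling there (where \eqref{c2} gives the clean factor $\omega_{d-1}/(1-s)$ and no logarithm), and then sending $s\to1^-$ using Proposition \ref{pointwise}; in the $s<1$ computation the cutoff mismatch produces a factor $\lambda^{s-1}$ in front of one term, and the logarithm in \eqref{resc} emerges as $\lim_{s\to1^-}\frac{\lambda^{s-1}-1}{1-s} = -\log\lambda$, which is the cleanest way to see where the $\log\lambda$ comes from.
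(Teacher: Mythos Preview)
Your overall strategy---change variables and account for the mismatch between the fixed ball $B_1$ and its rescaled image $B_{1/\lambda}$---is sound, but the argument you give for the annulus correction contains a genuine error. You claim that on $B_1(y)\setminus B_{1/\lambda}(y)$ the set $E$ agrees with $H^-(y)$ up to a negligible set, invoking the blow-up property of reduced boundary points. This is false: the annulus has \emph{fixed} macroscopic radii ($1$ and $1/\lambda$), it is not shrinking, and there is no reason whatsoever for $E\Delta H^-(y)$ to be small there. The blow-up property only says $|E\Delta H^-(y)|\cap B_r(y)=o(r^d)$ as $r\to 0$, which is irrelevant at scale $1/\lambda$. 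Relatedly, the ``$s=1$ analogue of \eqref{c2}'' you invoke, $\int_{H^-(y)\cap B_r(y)}\frac{(y-x)\cdot\nu(y)}{|x-y|^{d+1}}dx=\omega_{d-1}\log r+\text{const}$, is not literally true: that integral diverges.

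What actually makes the annulus computation work is an \emph{algebraic} cancellation, not a geometric one. If you add the near-term annulus contribution $\int_{(E\Delta H^-)\cap A}\frac{|(y-x)\cdot\nu|}{|x-y|^{d+1}}dx$ to the far-term annulus contribution $\int_{E\cap A}\frac{(y-x)\cdot\nu}{|x-y|^{d+1}}dx$, a case-by-case check (split $A$ into $E\cap H^-$, $E\setminus H^-$, $H^-\setminus E$, complement) shows the sum is exactly $\int_{H^-\cap A}\frac{(y-x)\cdot\nu}{|x-y|^{d+1}}dx$, with \emph{no} residual $E$-dependence. This last integral is convergent on the annulus and equals $\omega_{d-1}\log\lambda$ by a spherical-coordinate computation. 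This is precisely what the paper does, packaged slightly differently: it first proves the generalized formula \eqref{leo} valid for every cutoff radius $R>0$ (by redoing \eqref{c2} with $R$ in place of $1$, which gives $\omega_{d-1}R^{1-s}/(1-s)$ and hence the extra $\log R$ after the limit), and then the rescaling is immediate by substituting $R=1/\lambda$. Your alternative route through $\mathcal P_s$ and $\lim_{s\to1^-}\frac{\lambda^{1-s}-1}{1-s}=-\log\lambda$ is correct and is arguably the cleanest way to obtain the identity, using $\Per_s(\lambda E)=\lambda^{d-s}\Per_s(E)$ together with Proposition~\ref{pointwise}.
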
 

\begin{proof}
We observe that for every $R> 0$, with the same computation as in \eqref{c2} we get 
\begin{align*}  \int_{E\cap B_R(y)} \frac{(y-x)\cdot \nu(y)}{|x-y|^{d+s} } dx  & =\int_{H^-(y)\cap B_{R}(y) }   \frac{(y-x)\cdot \nu(y)}{|x-y|^{d+s} } dx-  \int_{(E\Delta H^-(y) )\cap B_{R}(y) }   \frac{|(y-x)\cdot \nu(y)|}{|x-y|^{d+s} } dx
 \\ \nonumber 
 &= \omega_{d-1}\frac{R^{1-s}}{1-s}-  \int_{(E\Delta H^-(y) )\cap B_{R}(y) }   \frac{|(y-x)\cdot \nu(y)|}{|x-y|^{d+s} } dx.   \end{align*}
Therefore, arguing as in Proposition \ref{pointwise}, we can show that  $\mathcal{H}(E)$ can be equivalently defined as follows, for all $R>0$
\begin{align}\label{leo} \mathcal{H}(E)= &- \omega_{d-1}\Per(E) (1+\log R)+ \int_{\partial^* E}\int_{(E\Delta H^-(y))\cap B_{R}(y) }  \frac{|(y-x)\cdot \nu(y)|}{|x-y|^{d+1} } dxd\mathcal{H}^{d-1}(y)\\ 
 & - \int_{\partial^* E} \int_{E\setminus B_R(y)} \frac{(y-x)\cdot \nu(y)}{|x-y|^{d+1} }  dxd\mathcal{H}^{d-1}(y).\nonumber \end{align} 
 This formula immediately gives the desired rescaling property \eqref{resc}. 
\end{proof} 

Now, we identify some classes of sets where $\mathcal{H}$ is bounded.  

\begin{proposition} \label{remreg}   Let $E$ be a  measurable set with $|E|<+\infty$ and $P(E)<+\infty$. 
\begin{enumerate} 
\item If $\partial E$ is uniformly of class  $C^{1, \alpha}$ for  some $\alpha>0$, then $\mathcal{H}(E)<+\infty$.
\item If $E$ is a convex set then, for every $s\in  (0,1)$, there holds 
\[\mathcal{H}(E)\leq \frac{(\dia E)^s
}{2} \int_{\partial^* E} H_s(E,y)d\mathcal{H}^{d-1}(y) -\omega_{d-1}\Per(E)\left(\frac{1}{s}+\log(\dia E)\right)\]
where  $\dia E:=\sup_{x,y\in E} |x-y|$, and $H_s(E,y)$ is the fractional mean curvature of $E$ at $y$, which is defined as 
\[H_s(E,y):=\int_{\R^d}\frac{\chi_{\R^d\setminus E}(x)-\chi_E(x)}{|x-y|^{d+s}}dx, \]
in the principal value sense. \end{enumerate}
\end{proposition}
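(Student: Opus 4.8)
The plan is to prove the two boundedness statements separately, both starting from the representation of $\mathcal{H}$ that is most convenient. For part (2), the convex case, the key observation is that convexity forces $E\subseteq H^-(y)$ for every $y\in\partial^* E$ (with outer normal $\nu(y)$), so $E\Delta H^-(y) = H^-(y)\setminus E$ and, more importantly, the sign of $(y-x)\cdot\nu(y)$ is constant ($\geq 0$) for $x\in E$. I would use the formula \eqref{leo} with the choice $R=\dia E$, so that $E\setminus B_R(y)=\emptyset$ and the last integral in \eqref{leo} vanishes. Then
\[
\mathcal{H}(E)= -\omega_{d-1}\Per(E)\bigl(1+\log(\dia E)\bigr)+\int_{\partial^* E}\int_{(H^-(y)\setminus E)\cap B_{\dia E}(y)}\frac{(y-x)\cdot\nu(y)}{|x-y|^{d+1}}\,dx\,d\mathcal{H}^{d-1}(y),
\]
where I have dropped the absolute value using the constant sign. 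The next step is to compare the inner integrand with the fractional mean curvature: bound $\frac{(y-x)\cdot\nu(y)}{|x-y|^{d+1}}\leq \frac{1}{|x-y|^{d+s}}\cdot\frac{(y-x)\cdot\nu(y)}{|x-y|^{1-s}}\leq \frac{(\dia E)^{s}}{|x-y|^{d+s}}\cdot\frac{(y-x)\cdot\nu(y)}{|x-y|}$, and since $\frac{(y-x)\cdot\nu(y)}{|x-y|}\leq 1$ on $H^-(y)$, this is at most $(\dia E)^{s}|x-y|^{-(d+s)}$. Using $E\subseteq H^-(y)$ again, the complement $H^-(y)\setminus E$ agrees up to a null set with $(\R^d\setminus E)$ intersected with $H^-(y)$, and integrating $|x-y|^{-(d+s)}$ over $(H^-(y)\setminus E)\cap B_{\dia E}(y)$ is bounded by $\tfrac12\int_{\R^d}\frac{\chi_{\R^d\setminus E}(x)-\chi_E(x)}{|x-y|^{d+s}}\,dx$ plus the explicit half-space contribution $\tfrac12\int_{H^-(y)\cap B_{\dia E}(y)}|x-y|^{-(d+s)}\,dx = \tfrac12\,\omega_{d-1}\frac{(\dia E)^{-s}}{s}$ (a rescaled version of \eqref{c2} — actually of the corresponding $|x-y|^{-(d+s)}$ computation). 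Collecting terms and multiplying by $(\dia E)^s$ gives exactly the claimed inequality; the $-\omega_{d-1}\Per(E)/s$ term appears from this half-space contribution while $-\omega_{d-1}\Per(E)\log(\dia E)$ is already present. I would need to be slightly careful that $H_s(E,y)$ is finite (principal value) — but the inequality is stated to hold as written, and when the right-hand side is $+\infty$ there is nothing to prove.

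For part (1), with $\partial E$ uniformly $C^{1,\alpha}$, I would again use \eqref{leo}, this time with $R=1$. The term $-\omega_{d-1}\Per(E)(1+\log 1) = -\omega_{d-1}\Per(E)$ is finite since $\Per(E)<+\infty$, and the far-field term $\int_{\partial^* E}\int_{E\setminus B_1(y)}\frac{(y-x)\cdot\nu(y)}{|x-y|^{d+1}}\,dx\,d\mathcal{H}^{d-1}(y)$ is bounded in absolute value by $\int_{\partial^* E}\int_{E\setminus B_1(y)}|x-y|^{-d}\,dx\,d\mathcal{H}^{d-1}(y)\leq \Per(E)\int_{\R^d\setminus B_1}|z|^{-d}\,dz$ — wait, that diverges, so instead I bound it by splitting $E\setminus B_1(y) = (E\cap B_{D}(y))\setminus B_1(y)$ only when $E$ is bounded, or more robustly note $|x-y|^{-d}$ is integrable against $dx$ over $E\setminus B_1(y)$ with bound $\leq |E|\cdot 1$ ... actually simplest: $\int_{E\setminus B_1(y)}|x-y|^{-(d+1)}|(y-x)\cdot\nu(y)|\,dx\leq \int_{E\setminus B_1(y)}|x-y|^{-d}\,dx$, and this is finite because the integrand is $\leq 1$ on $B_1(y)^c$ only if... no. The clean bound is $\int_{E\setminus B_1(y)}|x-y|^{-d}\,dx\leq\int_{\R^d\setminus B_1}\min(1,|z|^{-d-\eta})\,|z|^{\eta}\dots$; I would instead simply note that, since $|E|<+\infty$, for each $y$ the set $E\setminus B_1(y)$ has finite measure and $|x-y|^{-d}\leq 1$ there, giving $\leq |E|$, hence the double integral is $\leq |E|\Per(E)<+\infty$. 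The genuinely delicate term is the near-field integral $\int_{\partial^*E}\int_{(E\Delta H^-(y))\cap B_1(y)}\frac{1}{|x-y|^d}\,dx\,d\mathcal{H}^{d-1}(y)$, which appears in the upper bound \eqref{h}. Here I use the $C^{1,\alpha}$ regularity: in a suitable neighbourhood of $y$ the boundary $\partial E$ is a graph over the tangent hyperplane $\partial H^-(y)$ with $C^{1,\alpha}$ control, so for $x\in B_r(y)$ with $r$ small, the symmetric difference $E\Delta H^-(y)$ is contained in the region $\{x:\ |(x-y)\cdot\nu(y)|\leq L|x-y|^{1+\alpha}\}$ for a uniform constant $L$. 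Integrating $|x-y|^{-d}$ over this thin cuspidal region, in coordinates $(x',x_d)$ with $x_d=(x-y)\cdot\nu(y)$, gives $\int_{|x'|<r}\int_{|x_d|<L|x'|^{1+\alpha}}(|x'|^2+x_d^2)^{-d/2}\,dx_d\,dx'\lesssim\int_{|x'|<r}|x'|^{1+\alpha}|x'|^{-d}\,dx' = \int_{|x'|<r}|x'|^{1+\alpha-d}\,dx'<+\infty$ since $1+\alpha-d > -(d-1)$. The uniformity of the $C^{1,\alpha}$ assumption makes $L$, $r$ independent of $y$, so multiplying by $\mathcal{H}^{d-1}(\partial^* E)=\Per(E)<+\infty$ closes the estimate.

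The main obstacle is the near-field estimate in part (1): one must quantify precisely how thin the symmetric difference $E\Delta H^-(y)$ is near $y$ and check that the resulting power of $|x-y|$ is integrable over a $(d-1)$-dimensional cross-section, which is exactly where the exponent $1+\alpha$ beats the critical threshold $d-1$ (indeed $1+\alpha-d>-(d-1)\iff\alpha>0$, which is why $\alpha>0$ is needed and $C^1$ alone would not suffice). A secondary technical point, in part (2), is justifying the manipulation with the principal value defining $H_s(E,y)$ and the splitting of $H^-(y)\setminus E$ into the half-space term plus the fractional-curvature term; the convexity (which gives $E\subseteq H^-(y)$ and hence $\chi_{H^-(y)\setminus E}=\chi_{(\R^d\setminus E)\cap H^-(y)}$ a.e.) is what makes this bookkeeping go through cleanly, and the uniform bound $(y-x)\cdot\nu(y)\leq|x-y|$ for $x\in H^-(y)$ together with $|x-y|\leq\dia E$ on $E$ is what produces the factor $(\dia E)^s$. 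Both estimates are, after these observations, reductions to explicit one-dimensional and $(d-1)$-dimensional integrals of the type already computed in \eqref{c2}.
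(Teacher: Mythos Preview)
Your approach to part~(1) matches the paper's: both invoke the upper bound \eqref{h}, use the uniform $C^{1,\alpha}$ graph to contain $(E\Delta H^-(y))\cap B_\eta(y)$ in the cuspidal region $\{|x_d|\le C|x'|^{1+\alpha}\}$, and integrate explicitly (your far-field bound is fine and is in any case already absorbed in \eqref{h}).

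In part~(2) your strategy agrees with the paper's up to the last step, but that step has a genuine error. You propose to bound
\[
\int_{(H^-(y)\setminus E)\cap B_R(y)} \frac{dx}{|x-y|^{d+s}} \ \le\ \tfrac12 H_s(E,y) + \tfrac12 \int_{H^-(y)\cap B_R(y)}\frac{dx}{|x-y|^{d+s}},\qquad R=\dia E,
\]
and claim the second term equals $\tfrac12\,\omega_{d-1}R^{-s}/s$. But $\int_{H^-(y)\cap B_R(y)}|x-y|^{-(d+s)}\,dx=+\infty$: it is half of $\int_{B_R}|x|^{-(d+s)}$, which diverges at the origin. Equation \eqref{c2} computes the integral of $(y-x)\cdot\nu(y)\,|x-y|^{-(d+s)}$ over the half-ball, not of $|x-y|^{-(d+s)}$; there is no finite ``corresponding $|x-y|^{-(d+s)}$ computation''. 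The fix, used in the paper, is to exploit the reflection symmetry of $B_R(y)$ across $\partial H^-(y)$ together with $E\subseteq H^-(y)\cap B_R(y)$ to obtain the \emph{equality}
\[
\int_{(H^-(y)\setminus E)\cap B_R(y)}\frac{dx}{|x-y|^{d+s}} \;=\; \frac12\int_{B_R(y)}\frac{\chi_{\R^d\setminus E}-\chi_E}{|x-y|^{d+s}}\,dx \;=\; \frac12\, H_s(E,y) - \frac12\int_{\R^d\setminus B_R(y)}\frac{dx}{|x-y|^{d+s}},
\]
where the subtracted tail is now a convergent integral (equal to $\tfrac{d\omega_d}{2s}R^{-s}$). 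Multiplying by $R^s$ and integrating over $\partial^*E$ then yields the stated bound.
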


 \begin{proof} 
 \begin{enumerate} \item  If  $\partial E$ is uniformly of class  $C^{1, \alpha}$, then  there exists $\eta>0$  such that for all $y\in \partial E$, $\partial E \cap B_\eta(y)$ is a graph of a $C^{1,\alpha}$ function  $h$, such that $\|\nabla h\|_{C^{0,\alpha}(B_\eta'(y))}\leq C$, for some $C$ independent of $y$. Up to a rotation and translation,  we may assume that $y=0$,  $h(0)=0$ and
  $\nabla h (0)=0$ and moreover  $-C|x'|^{1+\alpha} \leq h(x')\leq C|x'|^{1+\alpha} $ for all $x'\in B'_\eta$.
 Therefore recalling that   $E\cap B_\eta=\{(x, x_d) \ |\ x_d\leq h(x') \}$  and that $H^-(0)=\{(x',x_d)\ |\ x_d\leq 0\}$, there holds 
\[(E\Delta H^-(0))\cap B_\eta \subseteq C_\eta:=\{(x', x_d)\ | -C|x'|^{1+\alpha}\leq x_d\leq C|x'|^{1+\alpha}, |x'|\leq \eta\}.\]
We compute
\begin{align*} &\int_{(E\Delta H^-(0))\cap B_{1} }  \frac{1}{|x|^{d} } dx= \int_{(E\Delta H^-(0))\cap B_{\eta} }  \frac{1}{|x|^{d} } dx+ \int_{(E\Delta H^-(0))\cap (B_{1}\setminus B_\eta) }  \frac{1}{|x|^{d} } dx\\ & \leq \int_{C_\eta} \frac{1}{|x|^{d}}dx+ \frac{1}{2}\int_{ B_1\setminus B_\eta} \frac{1}{|x|^{d}}dx\leq   \int_{C_\eta} \frac{1}{|x'|^{d}}dx+ \frac{1}{2}\int_{ B_1\setminus B_\eta} \frac{1}{|x|^{d}}dx 
\\ &\leq 2C \int_{B'_\eta}\frac{|x'|^{1+\alpha}}{|x'|^{d}}dx' -\frac{1}{2} d\omega_d \log (\eta\wedge 1)=\frac{2C(d-1)\omega_{d-1} \eta^{\alpha}}{\alpha} -\frac{1}{2} d\omega_d \log (\eta\wedge 1).\end{align*} 
Then, recalling \eqref{h} we get that
\[\mathcal{H}(E)\leq \left(\frac{2C(d-1)\omega_{d-1} \eta^{\alpha}}{\alpha} -\frac{1}{2} d\omega_d \log (\eta\wedge 1)\right)\Per (E)
+ d\omega_d |E|<+\infty.\]
\item Let $R=\dia E$. Then by \eqref{leo}, we get 
\begin{align*} \mathcal{H}(E)&= - \omega_{d-1}\Per(E) (1+\log R) + \int_{\partial^* E}\int_{(E\Delta H^-(y))\cap B_{R}(y) }  \frac{|(y-x)\cdot \nu(y)|}{|x-y|^{d+1} } dxd\mathcal{H}^{d-1}(y)\\ & \leq 
 - \omega_{d-1}\Per(E) (1+\log R) + \int_{\partial^* E}\int_{(E\Delta H^-(y))\cap B_{R}(y) }  \frac{1}{|x-y|^{d} } dxd\mathcal{H}^{d-1}(y)
 \\ &\leq 
 - \omega_{d-1}\Per(E)  (1+\log R)+ \int_{\partial^* E}\int_{(E\Delta H^-(y))\cap B_{R}(y) }  \frac{R^s}{|x-y|^{d+s} } dxd\mathcal{H}^{d-1}(y).
 \end{align*} By convexity for every $y\in \partial^*E$, recalling that $E\subseteq B_R(y)$, there holds 
 \begin{align*}& \int_{(E\Delta H^-(y))\cap B_{R}(y) }  \frac{R^s}{|x-y|^{d+s} } dx =\frac{R^s}{2}\int_{B_R(y)} \frac{\chi_{\R^d\setminus E}(x)-\chi_E(x)}{|x-y|^{d+s}}dx\\ &=\frac{R^s}{2}H_s(E, y)-\frac{R^s}{2}\int_{\R^d\setminus B_R(y)}\frac{1}{|x-y|^{d+s}}dx=\frac{R^s}{2}H_s(E, y)-\frac{d\omega_d}{2s}.\end{align*} 
 Therefore, substituting this equality in the previous estimate, we get
 \[ \mathcal{H}(E)\leq \frac{R^s}{2} \int_{\partial^* E}H_s(E,y)d\mathcal{H}^{d-1}(y)- \omega_{d-1}\Per(E) (1+\log R)-\frac{d\omega_d}{2s}\Per(E). \]
 
 \end{enumerate} 
 \end{proof}
 \begin{remark}\upshape  Note that by  Proposition \ref{remreg}, $\mathcal{H}(Q)<+\infty$ for every cube $Q=\Pi_{i=1}^d [a_i, b_i]$. 
\\ Indeed for $y\in \partial^* Q$, there holds that $H_s(Q, y)\sim \frac{1}{(d(y, (\partial Q\setminus \partial^* Q)))^{s}}$    for $s\in (0,1)$   and so
 $\int_{\partial^* Q}H_s(Q,y)d\mathcal{H}^{d-1}(y)<+\infty$. %
\end{remark} 
 
Finally we provide some useful equivalent representations of the functional $\mathcal{H}$. 

\begin{proposition}\label{repr}
\ \ \  \begin{enumerate} 
\item[(i)]   Let  $E$ be a set with finite perimeter such that $ \mathcal{H}(E)<+\infty$. Then
\begin{align*}\mathcal{H}( E)=& -\frac{d\omega_{d-1}}{d-1}\Per(E)\\ &- \lim_{\delta\to 0^+}\Big[\frac{1}{d-1}  \int_{\partial^* E} \int_{\partial^* E\setminus B_\delta(y)} \frac{\nu(y) \cdot \nu(x) }{|x-y|^{d-1}} d\mathcal{H}^{d-1}(x)d\mathcal{H}^{d-1}(y)+\omega_{d-1}\log \delta \Per(E)\Big]. \end{align*} 
\item[(ii)] Let  $E$ be a 
compact set with boundary of class $C^2$. Then
\begin{align*}\mathcal{H}( E)=&   \frac{1}{d-1}  \int_{\partial E} \int_{\partial E }  \frac{(\nu(x)-\nu(y))^2}{2|x-y|^{d-1}}d\mathcal{H}^{d-1}(x)d\mathcal{H}^{d-1}(y)
-\frac{d\omega_{d-1}}{d-1}\Per(E)
\\& +\frac{1}{d-1}  \int_{\partial  E} \int_{\partial E} \frac{1}{|x-y|^{d-1}}\left|\frac{(y-x)}{|y-x|}\cdot \nu(x)\right|^2((d-1)\log|x-y|-1) d\mathcal{H}^{d-1}(x)d\mathcal{H}^{d-1}(y) \\
&+ \int_{\partial E} \int_{\partial E } \frac{H(E,x)\nu(x)\cdot (y-x)}{|y-x|^{d-1}} \log|x-y| d\mathcal{H}^{d-1}(x)d\mathcal{H}^{d-1}(y).
\end{align*}
  \end{enumerate}
\end{proposition}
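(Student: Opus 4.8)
The plan is to derive both formulas from the representation \eqref{leo} of $\mathcal H(E)$ (equivalently from \eqref{limit}), the point being that for a fixed point $y\in\partial^*E$ the inner volume integral over $E$ turns into a boundary integral over $\partial^*E$ once the singularity at $x=y$ is excised on a small ball $B_\delta(y)$.

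\emph{For (i).} I fix $y\in\partial^*E$ and $\delta\in(0,1)$. Unfolding the absolute value as in \eqref{c1}--\eqref{c4} one sees that the integrand of $\mathcal H$ in \eqref{limit} is the increasing limit, as $\delta\downarrow0$, of
\[
I_\delta(y):=\int_{(E\Delta H^-(y))\cap(B_1(y)\setminus B_\delta(y))}\frac{|(y-x)\cdot\nu(y)|}{|x-y|^{d+1}}\,dx-\int_{E\setminus B_1(y)}\frac{(y-x)\cdot\nu(y)}{|x-y|^{d+1}}\,dx,
\]
and, using $\int_{H^-(y)\cap(B_1(y)\setminus B_\delta(y))}\frac{(y-x)\cdot\nu(y)}{|x-y|^{d+1}}\,dx=-\omega_{d-1}\log\delta$ (spherical coordinates and $\int_{S^{d-1}}|\omega\cdot e|\,d\mathcal H^{d-1}(\omega)=2\omega_{d-1}$, cf.\ \eqref{c2}), that $I_\delta(y)=-\omega_{d-1}\log\delta-\int_{E\setminus B_\delta(y)}\frac{(y-x)\cdot\nu(y)}{|x-y|^{d+1}}\,dx$. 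Since $y\notin\overline{E\setminus B_\delta(y)}$, the identity $\frac{(y-x)\cdot\nu(y)}{|x-y|^{d+1}}=\frac{1}{d-1}\,\div_x\!\big(\nu(y)\,|x-y|^{1-d}\big)$ and the Gauss--Green theorem on $E\setminus B_\delta(y)$ (whose reduced boundary, for a.e.\ $\delta$, is $\partial^*E\cap\{|x-y|>\delta\}$, with normal $\nu(x)$, together with $E^{(1)}\cap\partial B_\delta(y)$, with normal $\tfrac{y-x}{\delta}$) give
\[
I_\delta(y)=-\omega_{d-1}\log\delta-\frac{1}{d-1}\int_{\partial^*E\setminus B_\delta(y)}\frac{\nu(x)\cdot\nu(y)}{|x-y|^{d-1}}\,d\mathcal H^{d-1}(x)-\frac{1}{(d-1)\delta^{d}}\int_{E\cap\partial B_\delta(y)}(y-x)\cdot\nu(y)\,d\mathcal H^{d-1}(x).
\]
Integrating over $y\in\partial^*E$ and letting $\delta\to0$: on the left, monotone convergence for the increasing term together with $\big|\int_{E\setminus B_1(y)}\frac{(y-x)\cdot\nu(y)}{|x-y|^{d+1}}dx\big|\le d\omega_d$ gives $\int_{\partial^*E}I_\delta(y)\,d\mathcal H^{d-1}(y)\to\mathcal H(E)+\omega_{d-1}\Per(E)$; on the right, the last term is bounded by $d\omega_d$ and, by the blow-up of $E$ at $\mathcal H^{d-1}$-a.e.\ reduced-boundary point $y$, tends pointwise to $\frac{\omega_{d-1}}{d-1}$, so by dominated convergence its $y$-integral tends to $\frac{\omega_{d-1}}{d-1}\Per(E)$. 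Collecting terms and using $-\omega_{d-1}-\frac{\omega_{d-1}}{d-1}=-\frac{d\omega_{d-1}}{d-1}$ gives (i).

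\emph{For (ii).} For a compact $C^2$ set one has $\partial^*E=\partial E$ and, by Proposition \ref{remreg}, $\mathcal H(E)<+\infty$, so (i) applies. Writing $\nu(x)\cdot\nu(y)=1-\tfrac12|\nu(x)-\nu(y)|^2$ and using that the Gauss map is Lipschitz, so that $\frac{|\nu(x)-\nu(y)|^2}{|x-y|^{d-1}}\le C|x-y|^{3-d}$ is integrable on $\partial E\times\partial E$, the $|\nu(x)-\nu(y)|^2$--part of (i) needs no cut-off and yields the term $\frac{1}{d-1}\int_{\partial E}\int_{\partial E}\frac{(\nu(x)-\nu(y))^2}{2|x-y|^{d-1}}$. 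It remains to identify $L:=\lim_{\delta\to0}\big[\frac{1}{d-1}\int_{\partial E}\int_{\partial E\setminus B_\delta(y)}\frac{d\mathcal H^{d-1}(x)\,d\mathcal H^{d-1}(y)}{|x-y|^{d-1}}+\omega_{d-1}\log\delta\,\Per(E)\big]$, and the key point is the identity on $\partial E$ (for fixed $y$, obtained by computing the tangential divergence of $X(x):=-\frac{(y-x)\log|y-x|}{|y-x|^{d-1}}$)
\[
\div_{\partial E}^xX(x)=\frac{1}{|y-x|^{d-1}}+\frac{\big((d-1)\log|y-x|-1\big)\,\big((y-x)\cdot\nu(x)\big)^2}{|y-x|^{d+1}}.
\]
Integrating this over $\partial E\setminus B_\delta(y)$, applying the tangential divergence theorem on that manifold with boundary (the conormal contribution along $\partial E\cap\partial B_\delta(y)$, with outward conormal $\eta=(y-x)^T/|(y-x)^T|$ and $\mathcal H^{d-2}(\partial E\cap\partial B_\delta(y))=(d-1)\omega_{d-1}\delta^{d-2}(1+O(\delta^2))$, equals $-(d-1)\omega_{d-1}\log\delta+o(1)$), and noting that the surviving integrands are $O(|x-y|^{3-d}\log|x-y|)$ near the diagonal, hence integrable, one gets
\begin{align*}
\int_{\partial E\setminus B_\delta(y)}\frac{d\mathcal H^{d-1}(x)}{|x-y|^{d-1}}+(d-1)\omega_{d-1}\log\delta\ \xrightarrow[\delta\to0]{}\ &-(d-1)\int_{\partial E}\frac{H(E,x)\,\nu(x)\cdot(y-x)}{|x-y|^{d-1}}\log|x-y|\,d\mathcal H^{d-1}(x)\\
&-\int_{\partial E}\frac{(d-1)\log|x-y|-1}{|x-y|^{d-1}}\Big|\tfrac{y-x}{|y-x|}\cdot\nu(x)\Big|^2\,d\mathcal H^{d-1}(x).
\end{align*}
Integrating in $y$ (the convergence being uniform in $y$ by compactness and the $C^2$ bound, so $\lim_{\delta\to0}$ and $\int_{\partial E}d\mathcal H^{d-1}(y)$ may be exchanged) identifies $-L$ with the last two terms of the claimed formula, and adding the two pieces already found gives (ii).

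The main obstacle is the precise, $o(1)$-accurate evaluation of the $\delta$--dependent boundary terms — the spherical integral $\frac{1}{(d-1)\delta^{d}}\int_{E\cap\partial B_\delta(y)}(y-x)\cdot\nu(y)\,d\mathcal H^{d-1}$ in (i) and the conormal integral $\int_{\partial E\cap\partial B_\delta(y)}\langle X,\eta\rangle\,d\mathcal H^{d-2}$ in (ii) — together with justifying that $\lim_{\delta\to0}$ commutes with the outer surface integral (monotone convergence in (i); uniformity on the compact $C^2$ hypersurface in (ii)). Everything else is the Gauss--Green theorem; in particular the vector field $X$ in (ii) is essentially forced, being the only multiple of $(y-x)|y-x|^{1-d}$ by a radial function whose tangential divergence has $|y-x|^{1-d}$ as leading part.
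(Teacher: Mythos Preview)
Your proof is correct and follows essentially the same strategy as the paper: for (i), excise $B_\delta(y)$, use the identity $\frac{(y-x)\cdot\nu(y)}{|x-y|^{d+1}}=\frac{1}{d-1}\div_x\big(\nu(y)|x-y|^{1-d}\big)$ together with Gauss--Green, and control the $\partial B_\delta$ contribution via the blow-up of $E$ at $y$; for (ii), use the tangential divergence theorem on $\partial E\setminus B_\delta(y)$ with the vector field $X(x)=(x-y)\,|x-y|^{1-d}\log|x-y|$ (the paper's $\eta=-X$).

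The only organizational difference is in (i): the paper first reduces to $\mathrm{diam}\,E<1$ by rescaling and then applies the divergence theorem separately on $(H^-(y)\setminus E)\cap(B_1\setminus B_\delta)$ and $(E\setminus H^-(y))\setminus B_\delta$, so that the $\partial B_\delta$ contribution is supported on $(H^-(y)\Delta E)\cap\partial B_\delta$ and tends to $0$; you instead work directly on $E\setminus B_\delta(y)$, obtaining a $\partial B_\delta$ term supported on $E\cap\partial B_\delta(y)$ that converges to the constant $\frac{\omega_{d-1}}{d-1}$. Both routes give the same extra $-\frac{\omega_{d-1}}{d-1}\Per(E)$ in the final balance. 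One small slip: your stated bound $\big|\int_{E\setminus B_1(y)}\tfrac{(y-x)\cdot\nu(y)}{|x-y|^{d+1}}dx\big|\le d\omega_d$ is not quite right (the natural pointwise bound is $|E|$), but any uniform-in-$y$ bound suffices and the argument is unaffected.
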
 

\begin{proof}
\begin{enumerate} 
\item[(i)] If the diameter of $E$ is less than $1$, then $E\setminus B_1(y)=\emptyset$ for all $y\in \partial E$, and so
\begin{align*} \mathcal{H}( E)& = -\omega_{d-1}\Per(E)+ \int_{\partial^* E}\int_{(E\Delta H^-(y))\cap B_{1}(y) }  \frac{|(y-x)\cdot \nu(y)|}{|x-y|^{d+1} } dxd\mathcal{H}^{d-1}(y).\end{align*}  

Using that  \[\frac{1}{d-1}\text{div}_x\left(\frac{\nu(y)}{|x-y|^{d-1}}\right)=\frac{ (y-x)\cdot \nu(y)}{|x-y|^{d+1}}\]  we compute the second inner integral for $y\in \partial^* E$, recalling that $E\subset B_1(y)$, \begin{align*} 
&\int_{(E\Delta H^-(y))\cap B_{1}(y)   }  \frac{|(y-x)\cdot \nu(y)|}{|x-y|^{d+1} } dx\\ =&\int_{( H^-(y)\setminus E) \cap B_{1}(y)  }  \frac{(y-x)\cdot \nu(y)}{|x-y|^{d+1} } dx-\int_{(E\setminus H^-(y))   }  \frac{(y-x)\cdot \nu(y)}{|x-y|^{d+1} } dx
\\
=&\lim_{\delta\to 0} \Big[  \int_{ (H^-(y)\setminus E) \cap (B_{1}(y)\setminus B_\delta(y)) }  \frac{(y-x)\cdot \nu(y)}{|x-y|^{d+1} } dx-\int_{(E\setminus H^-(y)) \setminus B_\delta(y)  }  \frac{(y-x)\cdot \nu(y)}{|x-y|^{d+1} } dx\Big]
\\
=&\lim_{\delta\to 0} \Big[ -\frac{1}{d-1}\int_{\partial^* E\setminus B_\delta(y) } \frac{\nu(x)\cdot \nu(y)}{|x-y|^{d-1}}d\mathcal{H}^{d-1}(x) +\frac{1}{d-1}\int_{\partial B_1(y)\cap H^-(y)}\nu(x)\cdot \nu(y)d\mathcal{H}^{d-1}(x)\\ 
&+ \frac{1}{d-1}  \int_{\partial H^-(y)\cap (B_1(y)\setminus B_\delta(y)) } \frac{1}{|x-y|^{d-1}}d\mathcal{H}^{d-1}(x)- \frac{1}{\delta^{d-1}} \int_{\partial B_\delta(y)\cap (H^-(y)\Delta E)} \nu(x)\cdot \nu(y) d\mathcal{H}^{d-1}(x)\Big].\end{align*} 
Now we observe that 
\begin{align*}&\lim_{\delta\to 0} \frac{1}{\delta^{d-1}} \int_{\partial B_\delta(y)\cap (H^-(y)\Delta E)} |\nu(x)\cdot \nu(y)|d\mathcal{H}^{d-1}(x)\leq\lim_{\delta\to 0} \frac{1}{\delta^{d-1}} \int_{\partial B_\delta(y)\cap (H^-(y)\Delta E)} d\mathcal{H}^{d-1}(x)\\ & = \lim_{\delta\to 0} \int_{\partial B_1\cap\left( H^-(y)\Delta \frac{(E-y)}{\delta}\right)} d\mathcal{H}^{d-1}(x)=0 \end{align*}
since, for $y\in\partial^* E$, there holds that $\frac{(E-y)}{\delta}\to H^-(y)$ locally in $L^1$ as $\delta\to 0$, see \cite[Thm II.4.5]{maggibook}. 
We compute 
\[\frac{1}{d-1}\int_{\partial B_1(y)\cap H^-(y)}\nu(x)\cdot \nu(y)d\mathcal{H}^{d-1}(x)=\frac{1}{d-1}\int_{x_d=-\sqrt{1-|x'|^2}}x_d d\mathcal{H}^{d-1}(x)=-\frac{\omega_{d-1}}{d-1} \]
and 
\begin{equation*}\label{contoh}   \frac{1}{d-1}\int_{\partial H^-(y)\cap (B_1(y)\setminus B_\delta(y)) } \frac{1}{|x-y|^{d-1}}d\mathcal{H}^{d-1}(x)= \frac{1}{d-1}\int_{B_1'\setminus B_\delta'} \frac{1}{|x'|^{d-1}}dx'=- \omega_{d-1} \log \delta.\end{equation*}

Therefore  
\begin{align*}\mathcal{H}( E)=& -\frac{d \omega_{d-1}}{d-1}\Per(E)\\ &- \lim_{\delta\to 0^+}\Big[\frac{1}{d-1}  \int_{\partial^* E} \int_{\partial^* E\setminus B_\delta(y)} \frac{\nu(y) \cdot \nu(x) }{|x-y|^{d-1}} d\mathcal{H}^{d-1}(x)d\mathcal{H}^{d-1}(y)+\omega_{d-1}\log \delta \Per(E)\Big]. \end{align*}
If $\partial E$ has diameter greater or equal to $1$, we obtain the formula by rescaling, using \eqref{resc}.

\item[(ii)]  Let us fix $y\in \partial E$ and define   for all $x\in \partial E$, $x\neq y$,  the vector  field
\[\eta(x)= f(|x-y|)(y-x)\qquad \text{ where  $f(r):= \frac{\log r}{r^{d-1}}.$} \]  
By the Gauss-Green Formula (see \cite[I.11.8]{maggibook}), for $\delta>0$ there holds  
\begin{align*} &\frac{1}{d-1}\int_{\partial E\setminus B_\delta(y)} \div_{\tau} \eta(x)d\mathcal{H}^{d-1}(x)\\ & =\int_{\partial E\setminus B_\delta(y)} H(E,x)\nu(x)\cdot \eta(x)d\mathcal{H}^{d-1}(x)+\frac{1}{d-1}\int_{\partial B_\delta(y)\cap \partial E} \eta(x)\cdot \frac{x-y}{|x-y|} d\mathcal{H}^{d-2}(x)\\
& = \int_{\partial E\setminus B_\delta(y)} H(E,x)\nu(x)\cdot \eta(x)d\mathcal{H}^{d-1}(x)-  \omega_{d-1}\log \delta \end{align*} 
where $\div_\tau \eta(x)$ is the tangential divergence, that is $\div_\tau\eta(x)=\div\eta(x)- \nu(x)^T\nabla \eta(x)\nu(x)$.
Therefore  integrating the previous equality on  $ \partial E$,  we get  that 
\begin{align}\label{gg}    \omega_{d-1}\log \delta \Per(E)= & \int_{\partial E} \int_{\partial E\setminus B_\delta(y)} H(E,x)\nu(x)\cdot \eta(x)d\mathcal{H}^{d-1}(x)d\mathcal{H}^{d-1}(y)\\ &-
\frac{1}{d-1} \int_{\partial E}  \int_{\partial E\setminus B_\delta(y)} \div_{\tau} \eta(x)d\mathcal{H}^{d-1}(x)d\mathcal{H}^{d-1}(y).\nonumber \end{align} 
Now we compute 
\begin{align*} &\div_{\tau} \eta(x) = \tr\nabla \eta(x)- \nu(x)^T\nabla \eta(x)\nu(x)\\ 
&= -\tr \left(f(|x-y|)\mathbf{I} +f'(|x-y|)|x-y| \frac{y-x}{|x-y|}\otimes  \frac{y-x}{|x-y|}\right)\\
&+ \nu(x)^T\left(f(|x-y|)\mathbf{I} +f'(|x-y|)|x-y| \frac{y-x}{|x-y|}\otimes  \frac{y-x}{|x-y|}\right)\nu(x)\\
&= -f(|x-y|)d - f'(|x-y|)|x-y| +f(|x-y|)+f'(|x-y|)|x-y| \left|\frac{y-x}{|y-x|}\cdot \nu(x)\right|^2\\
&= -\frac{1}{|x-y|^{d-1}}+\frac{1-(d-1)\log |x-y|}{|x-y|^{d-1})}\left|\frac{y-x}{|y-x|}\cdot \nu(x)\right|^2 
\end{align*} 
where we used the equality $rf'(r)= \frac{1}{r^{d-1}}-(d-1)f(r)=\frac{1-(d-1)\log r}{r^{d-1}}$.\\
If we substitute this expression in \eqref{gg} we get 
\begin{align*}   &   \omega_{d-1}\log \delta \Per(E)= \int_{\partial E} \int_{\partial E\setminus B_\delta(y)} \frac{H(E,x)\nu(x)\cdot (y-x)}{|x-y|^{d-1}} \log|x-y| d\mathcal{H}^{d-1}(x)d\mathcal{H}^{d-1}(y)\\ &+
\frac{1}{d-1} \int_{\partial E}  \int_{\partial E\setminus B_\delta(y)}\frac{1}{|x-y|^{d-1}}d\mathcal{H}^{d-1}(x)d\mathcal{H}^{d-1}(y) \\
&-\frac{1}{d-1}  \int_{\partial E}  \int_{\partial E\setminus B_\delta(y)}\frac{1-(d-1)\log |x-y|}{|x-y|^{d-1})}\left|\frac{y-x}{|y-x|}\cdot \nu(x)\right|^2d\mathcal{H}^{d-1}(x)d\mathcal{H}^{d-1}(y).
\end{align*} 
The conclusion then follows by substituting $ \omega_{d-1}\log \delta \Per(E)$ with the previous expression in  the representation formula obtained in (i), and observing that $1-\nu(x)\nu(y)= (\nu(x)-\nu(y))^2/2$. 
\end{enumerate} 
 \end{proof}

 \addcontentsline{toc}{section}{References}  
  
\end{document}